\begin{document}
\title{Robust Portfolio Optimization under Ambiguous Chance Constraints
}


\author{Pulak Swain         \and
        Akshay Kumar Ojha 
}


\author{Pulak Swain* \and Akshay Kumar Ojha }
\institute{Pulak Swain \at
	School of Basic Sciences, Indian Institute of Technology Bhubaneswar \\
	Tel.: +917978772457\\
	\email{ps28@iitbbs.ac.in}           
	\and
	Akshay Kumar Ojha \at
	School of Basic Sciences, Indian Institute of Technology Bhubaneswar
}

\date{Received: date / Accepted: date}

\maketitle

\begin{abstract}
In this paper, we discuss the ambiguous chance constrained based portfolio optimization problems, in which the perturbations associated with the input parameters are stochastic in nature, but their distributions are not known precisely. We consider two different families of perturbation distributions-- one is when only upper $\&$ lower bounds of mean values are known to us, and the second one is along with the mean bounds, we also have the knowledge of the standard deviations of the perturbations. We derive the safe convex approximations of such chance constrained portfolio problems by using some suitable generating functions such as a piecewise linear function, an exponential function, and a piecewise quadratic function. These safe approximations are the robust counterparts to our ambiguous chance constrained problem and they are computationally tractable due to the convex nature of these approximations. 
\keywords{Markowitz Model \and Robust Counterpart \and Chance Constraint \and Safe Convex Approximation \and Piecewise Linear Approximation \and Bernstein Approximation \and Piecewise Quadratic Approximation}
\end{abstract}

\section{Introduction} \label{section 1}
\par The problem of optimal asset allocation is an important aspect of investment studies and it has been truly identified by Harry Markowitz in 1952 \cite{Markowitz J 1952}. Markowitz's studies show the importance of diversification of investment through the optimization models \cite{Markowitz 1959}. In such problems, the prime objective of the investor is to find the right balance between two conflicting components- risk and return. To visualize this opposing behavior of the above two components, a trade-off curve can be drawn which is known as the efficient frontier. Markowitz's mean-variance portfolio model is defined by using the expected returns and covariance of returns of the individual assets as the input parameters. For a mathematical overview, let's assume that the investor wants to invest his total wealth among $n$ assets where the return of any arbitrary $i^{th}$ asset over a time period $t$ is given by $r_{it} \ (t = 1, 2, \dots, T)$. Let $\mu_i$ and $\sigma_{ij}$ are respectively the expected return of $i^{th}$ asset and the covariance of return between $i$-$j$ pair of assets, which are defined as,
\begin{align*}
\begin{aligned}
&\mu_i=\dfrac{1}{T} \sum_{t=1}^{T} r_{it}, \quad \sigma_{ij}=\dfrac{1}{T} \sum_{t=1}^{T} (r_{it}-\mu_i) (r_{jt}-\mu_j)
\end{aligned}
\end{align*}
Let $x_i$ be the weight allocated for $i^{th}$ asset in the portfolio. Then the expected portfolio return ($\mu_p$) and the variance of portfolio return ($\sigma_p^2$) can be calculated as,
\begin{align*}
\begin{aligned}
&\mu_p=\sum_{i=1}^{n} \mu_i x_i, \quad \sigma_{p}^2=\sum_{i=1}^{n} \sum_{j=1}^{n} \sigma_{ij} x_i x_j
\end{aligned}
\end{align*}
Markowitz portfolio model minimizes the variance of portfolio return by constraining the expected return to become more than or equal to a target return ($\tau$). Moreover, the model assumes the weights to be non-negative and the sum of the weights to be $1$. Mathematically the Markowitz portfolio model is given by,
\begin{align}
\begin{aligned}	\label{eq*}
&\min_{x_i} && \frac{1}{2} \sum_{i=1}^{n} \sum_{j=1}^{n} \sigma_{ij} x_i x_j \\
&\textrm{s.t.:} && \sum_{j=1}^{n} \mu_j x_j \geq 1, \quad \sum_{j=1}^{n} {x_j}=1, \quad {x_j}\geq 0, \ j=1, 2, \dots, n.
\end{aligned}
\end{align}
\par One of the biggest challenges in portfolio optimization is to get the exact input parameters such as expected asset returns and the covariance of asset returns. The main reason for this is the several economic factors involved in the market which can affect these input parameters. This leads to the presence of some uncertainty factor in these parameters, which if ignored can potentially affect the optimal asset allocation. To address this issue, several approaches such as sensitivity analysis, dynamic programming, fuzzy programming, etc. are used in the field of optimization. However, in the last two decades, robust optimization approach has been widely used by researchers due to its ability to produce "completely immunized against uncertainty" solutions. This approach was first studied by El Ghaoui and Lebret \cite{Ghaoui 1997}, in which they worked on the uncertain least square problems. Then El Ghaoui et al. \cite{Ghaoui 1998} used the robust optimization approach for uncertain semidefinite programs and provided sufficient conditions for the robust solutions. Later on Ben-Tal and Nemirovski \cite{Ben-Tal 1999} studied the robust optimization for linear programming problems for ellipsoidal uncertainty and showed the tractability of the robust counterpart. Another work of Ben-Tal and Nemirovski \cite{Ben-Tal 2000} was based on some uncertain linear programming problems from NETLIB collection and their results showed that the robust solutions nearly lose nothing in optimality. In the literature, there are some more studies on the geometrical shape based uncertainties such as box, ellipsoidal, polyhedral, etc. The robust optimization approach was first used in the uncertain portfolio problems when Goldfarb and Iyenger \cite{Goldfarb 2003} introduced some uncertain structures for the market parameters and reformulated the robust portfolio problems into second order cone programming. T\"{u}t\"{u}nc\"{u} and Koenig \cite{Tutuncu 2004} formulated the robust counterparts when the expected return vector and the covariance matrix of asset returns are defined by lower and upper bounds and they also showed the robust efficient frontiers. Fliege and Werner \cite{Fliege 2014} studied the application of robust multiobjective optimization on uncertain mean-variance portfolio problems. Sehgal and Mehra \cite{Sehgal 2019} proposed robust portfolio optimization models for reward–risk ratios
by using Omega, semi-mean absolute deviation ratio, and weighted stable tail adjusted return ratio and also showed the efficiency of these models.
\par On the other hand, the chance constrained based optimization problems \cite{Biswal 1998} is another type of uncertain optimization problem in which the uncertain parameters are stochastic in nature. Moreover, chance constraints basically indicate the probability of uncertain constraint(s) to be satisfied, where the perturbations act as random variables. Calafiore and El Ghaoui \cite{Calafiore 2006} studied distributional robust chance constrained linear problems with radially distributed perturbations. They showed that the linear chance constraints can be converted explicitly into convex second-order cone constraints so that they can be efficiently solvable. Erdoğan and Iyengar \cite{Erdogan 2006} studied the ambiguous chance constrained problem which is approximated by a robust sampled problem. Nemirovski and Shapiro \cite{Nemirovski 2007} studied the convex approximations for the ambiguous chance constraints. In particular, they derived the Bernstein approximation and their approximation was convex and efficiently solvable. Yanıkoğlu and den Hertog \cite{Yanıkoğlu 2013} used the available historical data of uncertain parameters to get the safe approximations of ambiguous chance constraints and their approach has the advantage that it can also easily handle joint chance constraints. Bertsimas et al. \cite{Bertsimas 2018} derived safe approximations to nonlinear robust individual chance constraints by using statistical hypothesis tests. Zhang et al. \cite{Zhang 2018} solved distributionally robust chance-constrained binary programs by equivalently reformulating them as $0-1$ second order cone programs.
\par The main contribution of the paper is as follows: Sometimes the uncertainty present in the portfolio optimization problems is stochastic in nature and we don't know the exact distribution of the uncertain parameters. Such problems are in general difficult to solve, as in those problems we not only need to handle the chance constraints, but also we need to take care of the ambiguous nature of probability distributions present in the chance constraints. So the challenge is to convert such problems into deterministic as well as computationally tractable optimization problems. In this paper we use some generating functions to get the safe convex approximations to such problems. We consider two different families of perturbation distributions-- (i) when we know the bounds of mean perturbation values and (ii) when we know the mean and standard deviations of perturbation values. The safe convex approximations which we get for the family-i are quadratic programming problems and the approximations we get for the family-ii are quadratically constrained quadratic programming problems, which are computationally tractable. Moreover, we implemented the results into a real life stock market problem to find out the robust optimal allocation. Also, the risk-return efficient frontiers are drawn for all the safe approximation cases.
\par The remainder of the paper is organized as follows: Section \ref{section2} presents some definitions and basic concepts regarding robust optimization and the chance constrained based portfolio optimization problems. Section \ref{section3} constitutes the discussion on generating function based safe convex approximation and then the safe approximations for several families of perturbation distributions are derived in \ref{section4}. Further, a numerical example consisting of an Indian stock market problem is given in Section \ref{section5}. Finally, some concluding remarks are provided in Section \ref{section6}.
\section{Preliminaries} \label{section2}
\subsection{Uncertainty in Optimization Problems and the Robust Optimization Approach}
When the constant coefficients in an optimization problem are not fixed values; but they perturb around some nominal values, such problems are called uncertain optimization problems. Mathematically this can be represented as \cite{Bertsimas 2011},
\begin{align} \label{eq0}
	\begin{aligned}	
		&\min_{\bm x} && f(\bm{x},\bm{u}) \\
		&\textrm{s.t.:} && c(\bm{x},\bm{u}) \geq 0, \quad \forall \bm u = \bm{u}^{(0)}+\displaystyle{\sum_{j=1}^{n}}\zeta_j \bm{u}^{(j)}  \in \mathscr{U} 
	\end{aligned}
\end{align}
where $\bm x$ is the vector of decision variables and $\bm u$ is the vector of uncertain variables, $\bm{u}^{(0)}$ is the vector of nominal values of uncertain parameters and $\bm{u}^{(j)}$ (for $j=1, 2, \dots, n$) are the basic shifts.\\
Now we will go through some definitions related to robust optimization.
\begin{definition} [Robust Feasible Solution] \label{def1} \cite{Ben-Tal 2009}
	A vector $\bm x$ is said to be the robust feasible solution of the uncertain problem \eqref{eq0} if it satisfies the uncertain constraints for all realizations of the uncertain set $\mathscr{U}$, that is, if $\bm x$ satisfies
	\begin{align*}
		\begin{aligned}	
			c(\bm{x},\bm{u}) \leq 0, \quad \forall \bm u  \in \mathscr{U} 
		\end{aligned}
	\end{align*}
\end{definition}
\begin{definition} [Robust Value] \label{def2} \cite{Ben-Tal 2009}
For a given candidate solution $\bm{x}$, the robust value $\widehat{f}(\bm{x})$ of the objective in problem \eqref{eq0} is the largest value of $f(\bm{x},\bm{u})$ over all realizations of the data from the uncertain set, that is
	\begin{align*}
		\begin{aligned}	
			\widehat{f}(\bm{x})= \sup_{\bm u  \in \mathscr{U}} f(\bm{x},\bm{u})
		\end{aligned}
	\end{align*}
\end{definition}
It is to be noted that if the problem \eqref{eq0} were a maximization problem, then the robust value would have been $\widehat{f}(\bm{x})= \inf_{\bm u  \in \mathscr{U}} f(\bm{x},\bm{u})$.
\begin{definition} [Robust Counterpart] \label{def3} \cite{Ben-Tal 2009}
	The robust counterpart (RC) of the uncertain problem \eqref{eq0} is the optimization problem 
	\begin{align*}	
		\begin{aligned} 
			& \min_{\bm x} && \left \{ \sup_{\bm u \in \mathscr{U}}	f(\bm x, \bm u) \right \}\\
			&\text{s.t.:} && c(\bm x, \bm u) \leq \bm 0, \quad \forall \bm u  \in \mathscr{U}
		\end{aligned}
	\end{align*}
	of minimizing the robust value of the objective over all the robust feasible solutions to the uncertain problem.
\end{definition}
\begin{definition} [Robust Optimal Solution] \label{def4} \cite{Ben-Tal 2009}
	The solution of the RC problem is said to be the robust optimal solution of the uncertain problem \eqref{eq0}.
\end{definition}
\subsection{Ambiguous Chance Constraint}
Consider the uncertain constraint given in the problem \eqref{eq0},
\begin{align}
	\begin{aligned}	\label{eq1}
		c(\bm x, \bm u) \geq 0, \quad \bm{u}= \bm{u}^{(0)}+\displaystyle{\sum_{j=1}^{n}}\zeta_j \bm{u}^{(j)}
	\end{aligned}
\end{align}
In the ideal scenario, we would like to find a solution $\bm x$ which satisfies the constraint \eqref{eq1} for all realizations of the uncertain set. But when the data perturbation is stochastic in nature, it is not always possible to find such a solution. So we look for a candidate solution that satisfies \eqref{eq1} for "nearly all" realizations of $\bm{\zeta}$. Thus we want our constraint to be satisfied with a predefined reliability level. Then the probability chance constraint to \eqref{eq1} is introduced which is given by, 
\begin{align}
\begin{aligned}	\label{eq2}
{Prob}_{\bm{\zeta}\sim P} \left \{\bm{\zeta}: c(\bm x, \bm u) \geq 0, \quad \bm{u}= \bm{u}^{(0)}+\displaystyle{\sum_{j=1}^{n}}\zeta_j \bm{u}^{(j)} \right \} \geq \beta
\end{aligned}
\end{align}
Eq. \eqref{eq2} indicates that the constraint \eqref{eq1} would be satisfied with a probability of at least $\beta$, where $\beta$ may be taken as close to 1.\\
However, sometimes the probability distribution of $\bm{\zeta}$ is not known precisely; all we know is that the distribution belongs to a given family $\mathcal{P}$ of probability distributions on $\mathbb{R}^n$. In such cases, we need to convert the problem \eqref{eq2} into an ambiguously chance
constrained version which is given by,
\begin{align*}
\begin{aligned}	
{Prob}_{\bm{\zeta}\sim P} \left \{\bm{\zeta}: c(\bm x, \bm u) \geq 0, \quad \bm{u}= \bm{u}^{(0)}+\displaystyle{\sum_{j=1}^{n}}\zeta_j \bm{u}^{(j)} \right \} \geq \beta, \quad \forall P \in \mathcal{P}
\end{aligned}
\end{align*}
One of the biggest drawbacks of these ambiguous chance constrained problems is the tractability issues of the robust counterparts. The problem becomes NP-hard even for the cases when P is simple.
\subsection{Uncertain Portfolio Problem with Chance Constraint}
Consider the portfolio optimization problem \eqref{eq*} in vector form given by,
\begin{align}
\begin{aligned}	\label{eq2a}
&\min_{\bm x} && \frac{1}{2} \bm{x}^\top\bm{\Sigma} \bm{x} \\
&\textrm{s.t.:} && \bm{\mu}^\top \bm{x} \geq \tau, \quad	\bm{e}^\top\bm{x}=1, \quad \bm{x}\geq \bm{0}
\end{aligned}
\end{align}
where $\bm{\mu}=\begin{bmatrix}
\mu_1\\  \mu_2\\ \vdots\\ \mu_n
\end{bmatrix}$, $\bm{\Sigma}=\begin{bmatrix}
\sigma_{11} &  \sigma_{12} & \dots & \sigma_{1n}\\
\sigma_{21} &  \sigma_{22} & \dots & \sigma_{2n}\\
\vdots & \vdots & \ddots & \vdots\\
\sigma_{n1} &  \sigma_{n2} & \dots & \sigma_{nn}\\
\end{bmatrix}$,
 $\bm{x}=\begin{bmatrix} x_1\\  x_2\\ \vdots\\ x_n \end{bmatrix}$,
 $\bm e=\begin{bmatrix} 1\\  1\\ \vdots\\ 1 \end{bmatrix}$,
 $\bm 0=\begin{bmatrix} 0\\  0\\ \vdots\\ 0 \end{bmatrix}$.\\
The uncertainty in portfolio optimization problems in general occurs due to the input parameters -- expected returns and covariance of returns of the assets. However, the studies show that the uncertainty in the covariance of asset returns does not affect the optimal solution so much \cite{Pulak 2021}. So in this study, we assume that the uncertainty occurs only in the expected returns of assets.\\
 Then the uncertain portfolio model can be written as,
\begin{align}
\begin{aligned} \label{eq3a}	
	&\min_{\bm x} && \frac{1}{2} \bm{x}^\top\bm{\Sigma} \bm{x} \\
	&\textrm{s.t.:} && \bm{\mu}^\top \bm{x} \geq \tau, \quad \bm{\mu} \in \mathscr{U}_{\bm{\mu}},\\
	& && \bm{e}^\top\bm{x}=1, \quad \bm{x}\geq \bm{0}
\end{aligned}
\end{align}
where $\mathscr{U}_{\bm{\mu}}$ is the uncertainty set associated with $\bm{\mu}$ given by,
\begin{align*}
\begin{aligned}	
	\mathscr{U}_{\mu}=\left\{\bm{\mu} : \bm{\mu} \equiv \bm{\mu}(\bm{\zeta}) = \bm{\mu}^{(0)}+\sum_{j=1}^{n}\zeta_j \bm{\mu}^{(j)} \right\}
\end{aligned}
\end{align*}
Now the model \eqref{eq3a} can be written with ambiguous chance constraint as,
\begin{align}
\begin{aligned}	\label{eq4}
	&\min_{\bm x} && \frac{1}{2} \bm{x}^\top\bm{\Sigma} \bm{x} \\
	&\textrm{s.t.:} && \forall P \in \mathcal{P} : {Prob}_{\bm{\zeta}\sim P} \left \{[\bm{\mu}(\bm{\zeta})]^\top \bm{x} \geq \tau \right \} \geq \beta, \\
	& &&	\bm{e}^\top\bm{x}=1, \quad \bm{x}\geq \bm{0}
\end{aligned}
\end{align}
The chance constraint in problem \eqref{eq4} can also be written as,
\begin{align}
\begin{aligned}	\label{eq5}
& {Prob}_{\bm{\zeta}\sim P} \left \{[\bm{\mu}(\bm{\zeta})]^\top \bm x <\tau \right \} \leq 1-\beta
\end{aligned}
\end{align}

\section{Generating-Function-Based Safe Convex Approximation for Ambiguous Chance Constrained Portfolio Optimization Model} \label{section3}
The major drawbacks of chance constrained problems are:
\begin{itemize}
\item It is difficult to evaluate with high accuracy the probability on the left side of a chance constraint.\\
\item In most of the cases the feasible set of the chance constraint is non-convex, due to which the optimization problem becomes very difficult to solve.
\end{itemize}
One way to overcome these issues is to replace the chance constraint in the model \eqref{eq4} with its computationally tractable safe approximation and then solve that approximation to get the robust solution. 
\begin{definition} [Safe Convex Approximation] \label{def5} \cite{Ben-Tal 2009}
Let $\{\bm{\mu}^{(j)}\}_{j=0}^{n}$, $Prob$, $\beta$ be the data of chance constraint \eqref{eq5} and let $S$ be a system of convex constraints on $\bm x$ and additional variables $\bm{v}$. Then $S$ is said to be a safe convex approximation of chance constraint \eqref{eq5} if the $\bm{x}$ component of every feasible solution $(\bm{x}; \bm{v})$ of $S$ is feasible for the chance constraint.
\end{definition}
Moreover, a safe convex approximation $S$ of \eqref{eq5} is said to be computationally tractable, if the convex constraints forming $S$ are efficiently computable.\\
Nemirovski and Shapiro \cite{Nemirovski 2007} suggested a generating function based approach to obtain the computationally tractable safe convex
approximation, where the generating function $\gamma(\cdot)$ must satisfy the
following properties:\\
(i) $\gamma(\cdot): \mathbb R \to \mathbb R$ is a non-negative non-decreasing function.\\
(ii) $\gamma(\cdot)$ is convex.\\
(iii) $\gamma(0) \geq 1$ and $\gamma(t) \to 0$ as $t \to -\infty$.\\
Let's denote the left hand side of the chance constraint \eqref{eq5} by $p(\bm{\mu}, \bm{x})$ and the constraint is given by,
\begin{align}
\begin{aligned}	\label{eq6}
& p(\bm{\mu}, \bm{x}) \equiv {Prob}_{\bm{\zeta}\sim P} \left \{[\bm{\mu}(\bm{\zeta})]^\top \bm x <\tau \right \} \leq 1-\beta
\end{aligned}
\end{align}
The constraint can also be written as,
\begin{align}
\begin{aligned} \label{eq6a}
	& E \left [\chi(\tau-\bm{\mu}(\bm{\zeta})^\top \bm x)\right ] \leq 1-\beta,
\end{aligned}
\end{align}
where $\chi(t)$ is the characteristic function given  by,
\begin{align*}
\begin{aligned}	
&\chi(t)=\left \{ \begin{array}{ll}0 &\mbox{ for } t \leq 0\\
1 &\mbox{ for } t > 0 \end{array} \right.
\end{aligned}
\end{align*} 
and $E[.]$ is the expectation of the function defined as,
\begin{align*}
\begin{aligned}	
&  E \left [\chi(\tau-\bm{\mu}(\bm{\zeta})^\top \bm x)\right ]= \int \chi(\tau-\bm{\mu}(\bm{\zeta})^\top \bm x) \ dP(\bm{\zeta}).
\end{aligned}
\end{align*}
Since $\chi(\cdot)$ is not a convex function, it follows that we can not convert \eqref{eq6a} into a convex constraint of $\bm{\mu}$ by using the function $\chi(\cdot)$.\\
To find out a convex approximation for the chance constraint \eqref{eq6}, let $\gamma(t)$ be a convex function on the axis which is everywhere $\geq$ $\chi(t)$ and it satisfies the properties (i)-(iii) mentioned before.\\
Then we have,
\begin{align}
\begin{aligned}	\label{eq7}
& p(\bm{\mu}, \bm{x})=E \left [\chi(\tau-\bm{\mu}(\bm{\zeta})^\top \bm x)\right ] \leq E \left [\gamma(\tau-\bm{\mu}(\bm{\zeta})^\top \bm x)\right ]=\psi(\bm{\mu}, \bm{x}) \text{ (say)}
\end{aligned}
\end{align}
Now let $\psi^+(\bm{\mu}, \bm{x})$ is some upper bound of $\psi(\bm{\mu}, \bm{x})$, that is, $\psi(\bm{\mu}, \bm{x}) \leq \psi^+(\bm{\mu}, \bm{x}) ~ \forall \bm{\mu}$. Then the constraint 
\begin{align}
\begin{aligned}	\label{eq7a}
& \psi^+(\bm{\mu}, \bm{x}) \leq 1-\beta
\end{aligned}
\end{align} 
can be a safe convex approximation of the chance constraint \eqref{eq5}.\\
When the function $\psi^+(\cdot)$ is efficiently computable, the safe approximation \eqref{eq7a} is tractable.\\
This observation can be converted into the following result:
 \begin{theorem} \label{prop1}
 	For the ambiguous chance constraint \eqref{eq5} of an uncertain portfolio optimization problem \eqref{eq4}, a generating function $\gamma(\cdot)$ can be used to write the safe convex approximation given by \eqref{eq7a}, where $\psi^+(\bm{\mu}, \bm{x}) \geq E \left [\gamma(\tau-\bm{\mu}(\bm{\zeta})^\top \bm x)\right ]$. Moreover, this approximation is tractable, provided that $\psi^+(\cdot)$ is efficiently computable.
 \end{theorem}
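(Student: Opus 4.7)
The plan is to prove the theorem by assembling the chain of inequalities already developed in the paragraphs leading up to the statement, together with a short convexity check and a one-line tractability argument. First I would verify the pointwise domination $\gamma(t) \geq \chi(t)$ on all of $\mathbb{R}$ directly from properties (i)--(iii): for $t \leq 0$ this follows from non-negativity of $\gamma$, while for $t > 0$ monotonicity together with $\gamma(0) \geq 1$ gives $\gamma(t) \geq \gamma(0) \geq 1 = \chi(t)$. Substituting $t = \tau - \bm{\mu}(\bm{\zeta})^\top \bm x$ and taking expectation under an arbitrary $P \in \mathcal{P}$ yields the central chain
\begin{align*}
p(\bm{\mu}, \bm x) \;=\; E\bigl[\chi(\tau - \bm{\mu}(\bm{\zeta})^\top \bm x)\bigr] \;\leq\; E\bigl[\gamma(\tau - \bm{\mu}(\bm{\zeta})^\top \bm x)\bigr] \;=\; \psi(\bm{\mu}, \bm x) \;\leq\; \psi^+(\bm{\mu}, \bm x),
\end{align*}
where the last inequality is the defining hypothesis on $\psi^+$. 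Hence every $\bm x$ satisfying $\psi^+(\bm{\mu}, \bm x) \leq 1-\beta$ also satisfies \eqref{eq5} for every $P \in \mathcal{P}$, which establishes safeness uniformly over the ambiguity family.

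Next, for the convex part, I would fix $\bm{\zeta}$, observe that $\bm x \mapsto \tau - \bm{\mu}(\bm{\zeta})^\top \bm x$ is affine, and use properties (i)--(ii) to conclude that $\bm x \mapsto \gamma(\tau - \bm{\mu}(\bm{\zeta})^\top \bm x)$ is convex in $\bm x$. Convexity is preserved under expectation against $P$, so $\psi(\bm{\mu}, \cdot)$ is convex in $\bm x$. Choosing $\psi^+$ itself to be a convex upper bound, which is the natural implicit requirement for \eqref{eq7a} to qualify under Definition \ref{def5}, makes $\{\bm x : \psi^+(\bm{\mu}, \bm x) \leq 1-\beta\}$ convex. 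Combined with the deterministic convex constraints $\bm e^\top \bm x = 1$ and $\bm x \geq \bm 0$ already present in \eqref{eq4}, this produces a safe convex approximation as claimed. Tractability is then immediate from the definition following Definition \ref{def5}: efficient computability of $\psi^+(\cdot)$ is precisely the property that makes the convex constraint \eqref{eq7a} efficiently computable.

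No serious obstacle arises in this argument; the derivation preceding the theorem already performs the substantive work, and the remaining content is organizational. The two points I would be careful to articulate explicitly are the justification of $\gamma \geq \chi$ from the three generating-function properties, since this is where all of (i)--(iii) are actually used, and the implicit convexity requirement on $\psi^+$, which I would state as part of the hypotheses rather than leave hidden in the word ``convex'' in the conclusion.
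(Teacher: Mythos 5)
Your proposal is correct and follows essentially the same route as the paper, whose argument for this theorem is precisely the chain $p(\bm{\mu},\bm{x}) = E[\chi(\tau-\bm{\mu}(\bm{\zeta})^\top \bm x)] \leq E[\gamma(\tau-\bm{\mu}(\bm{\zeta})^\top \bm x)] = \psi(\bm{\mu},\bm{x}) \leq \psi^+(\bm{\mu},\bm{x})$ developed in the paragraphs before the statement, together with the tractability remark following Definition \ref{def5}. Your two additions --- deriving $\gamma \geq \chi$ from properties (i)--(iii) rather than assuming it, and making explicit that $\psi^+$ must itself be convex (via convexity of $\gamma$ composed with the affine map $\bm x \mapsto \tau - \bm{\mu}(\bm{\zeta})^\top\bm x$ and preservation under expectation) --- are sound refinements of details the paper leaves implicit, not a different approach.
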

It is to be noted that, to reduce the conservatism of the above approximation, we can also do scaling as follows:\\
From the definition of $\chi(t)$, it can be observed that, $\chi(t)=\chi(\alpha ^{-1}t)$, $\forall \alpha >0$, which implies $\gamma(\alpha ^{-1}t) \geq \chi(\alpha ^{-1}t) = \chi(t)$, $\forall t$.\\
So from eq. \eqref{eq5} we get,
\begin{align}
\begin{aligned}	\label{eq8}
& p(\bm{\mu}, \bm{x})\geq \psi^+(\alpha^{-1}\bm{\mu}, \bm{x})
\end{aligned}
\end{align}
and the constraint 
\begin{align}
\begin{aligned}	\label{eq9}
& \psi^+(\alpha^{-1}\bm{\mu}, \bm{x}) \leq 1-\beta
\end{aligned}
\end{align}
is also a safe convex approximation for all $\alpha>0$.\\
This implies $\alpha\psi^+(\alpha^{-1}\bm{\mu}, \bm{x}) - \alpha(1-\beta) \leq 0$ is a safe approximation for any $\alpha>0$.\\
So the infimum of the left hand side of the function can also be used to write the safe convex approximation, which is given by,
\begin{align}
\begin{aligned} \label{eq10}
\inf_{\alpha >0} \left [\alpha\psi^+(\alpha^{-1}\bm{\mu}, \bm{x})-\alpha(1-\beta) \right ] \leq 0
\end{aligned}
\end{align}
When the function $\psi^+(\cdot)$ is efficiently computable, the left hand side of \eqref{eq10} is also efficiently computable. Then the approximation \eqref{eq10} is tractable.\\
In the next section, we obtain the robust counterparts of the problem \eqref{eq4} for several families $\mathcal{P}$ of perturbation distributions.
\section{Safe Convex Approximations for Several Families of Perturbation Distributions} \label{section4}
We consider two different families of perturbation distributions-- (i) the family whose upper $\&$ lower mean bounds are only known to us, and (ii) the family whose upper $\&$ lower mean bounds, and the standard deviations are known to us.\\
To derive the safe approximations we use some generating functions $\gamma(t)$ which are greater or equal to $\chi(t)$ everywhere and satisfy the properties given earlier.
\subsection{Family of Perturbation Distributions with Known Upper $\&$ Lower Mean Bounds} \label{sec4.1}
In this case we consider the family of $\bm{\zeta}=[\zeta_1, \zeta_2, \dots, \zeta_n]^\top$ with the upper $\&$ lower bounds of each $\zeta_j$, $(j=1, 2, \dots, n)$ are known to be $m_j^U$ $\&$ $m_j^L$ respectively. For deriving the safe convex approximation we use a piecewise linear function and the exponential function as the generating functions.
\subsubsection{Piecewise Linear Generating Function Based Approximation}
	The function
\begin{align}
\begin{aligned}	\label{eq10a}
	&\gamma(t)=[1+t]_+=\left \{ \begin{array}{ll}0 &\mbox{ for } t \leq -1\\
		1+t &\mbox{ for } t > -1 \end{array} \right.
\end{aligned}
\end{align}
is everywhere greater than or equal to $\chi(t)$. Further, it
is a non-negative and  non-decreasing convex function satisfying $\gamma(0) \geq 1$ and $\lim_{t \to -\infty}\gamma(t) =0$ and hence can be considered as a generating function.\\
\begin{figure}[h!]
	\centering
\includegraphics[height=5cm,width=12cm]{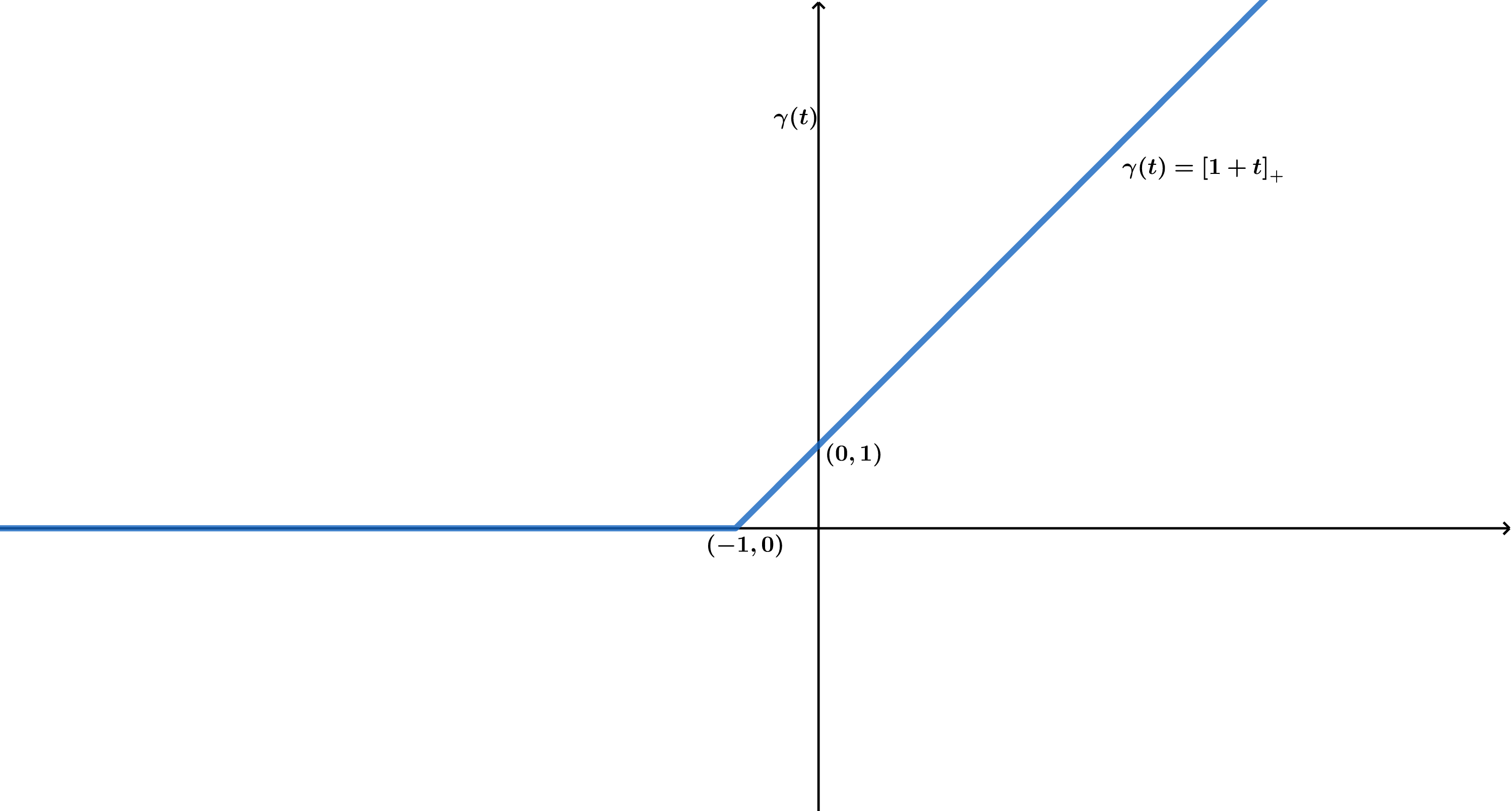}
	\caption{Piecewise Linear Generating Function}
	\label{fig1}
\end{figure}
Now since $E \left [\chi(\tau-\bm{\mu}(\bm{\zeta})^\top \bm x)\right ] \leq E \left [\gamma(\tau-\bm{\mu}(\bm{\zeta})^\top \bm x)\right ]$ by eq. \eqref{eq7}, it follows that we can replace the chance constraint \eqref{eq6a} by the constraint given by,
\begin{align}
\begin{aligned} \label{eq10b}
	& \qquad E_{\bm{\zeta \sim P}} \left [ \gamma(\tau-\bm{\mu}(\bm{\zeta})^\top x) \right ] \leq 1-\beta 
\end{aligned}
\end{align}
By using the definition of $\gamma(.)$ given by \eqref{eq10a}, the constraint \eqref{eq10b} can be written as,
\begin{align}
\begin{aligned} \label{eq10c}
 &  \qquad E_{\bm{\zeta \sim P}}  \left [\max\left \{0, 1+\tau-\bm{\mu}(\bm{\zeta})^\top x \right \}\right ]  \leq 1-\beta
\end{aligned}
\end{align}
The constraint \eqref{eq10c} can be replaced with two equivalent constraints,
\begin{subequations}
	\begin{align}
	 & E_{\bm{\zeta \sim P}}  \left [0 \right ]  \leq 1-\beta \label{eq10di}\\
	& E_{\bm{\zeta \sim P}}  \left [1+\tau-\bm{\mu}(\bm{\zeta})^\top x \right ]  \leq 1-\beta \label{eq10dii}
\end{align}
\end{subequations}
Here the constraint \eqref{eq10di} is trivial, as by the assumption of chance constraint problems the value of confidence level $\beta$ lies between $0$ and $1$, and hence we have $1-\beta \geq 0$. Therefore now we only need to proceed with the constraint \eqref{eq10dii}.\\
The left hand side of \eqref{eq10dii} can be simplified in component form as,
\begin{align}
\begin{aligned} \label{eq10e}
E_{\bm{\zeta \sim P}}  \left [1+\tau-\bm{\mu}(\bm{\zeta})^\top x \right ] &=E_{{\zeta_j \sim P_j}} \left [1+\tau-\sum_{j=1}^{n} {\mu_j^{(0)} x_j}-\sum_{j=1}^n {(\mu_j^{(j)} x_j)\zeta_j}\right ]\\
&=  1+\tau-\sum_{j=1}^{n} {\mu_j^{(0)} x_j}- E_{{\zeta_j \sim P_j}} \left [\sum_{j=1}^n {(\mu_j^{(j)} x_j)\zeta_j}\right ]\\
&=  1+\tau-\sum_{j=1}^{n} {\mu_j^{(0)} x_j}- \sum_{j=1}^n {(\mu_j^{(j)} x_j)E_{{\zeta_j \sim P_j}}\left [\zeta_j \right ]}
\end{aligned}
\end{align}
It is to be noted that $\zeta_j$'s are the random variables in eq. \eqref{eq10e}, so the terms without $\zeta_j$ are treated as constants.\\
Now our objective is to find out an upper bound of the right hand side of eq. \eqref{eq10e}, which would be our $\psi^+(\bm{\mu}, \bm{x})$ given in Theorem \ref{prop1}. Since the upper $\&$ lower mean bounds each perturbation $\zeta_j$ in the family are known to be $m_j^U$ $\&$ $m_j^L$ respectively, that is, $m_j^L \leq E_{{\zeta_j \sim P_j}}\left [\zeta_j \right ] \leq m_j^U$, it follows from eq. \eqref{eq10e} that,
\begin{align}
\begin{aligned} \label{eq10f}
	E_{\bm{\zeta \sim P}}  \left [1+\tau-\bm{\mu}(\bm{\zeta})^\top x \right ] \leq  1+\tau-\sum_{j=1}^{n} {\mu_j^{(0)} x_j}- \sum_{j=1}^n {(\mu_j^{(j)} x_j)m_j^L}
\end{aligned}
\end{align}
Thus by taking $\psi^+(\bm{\mu}, \bm{x})=1+\tau-\sum_{j=1}^{n} {\mu_j^{(0)} x_j}- \sum_{j=1}^n {(\mu_j^{(j)} x_j)m_j^L}$, the condition in Theorem \ref{prop1} is satisfied and the constraint
\begin{align*}
\begin{aligned}	
& 1+\tau-\sum_{j=1}^{n} {\mu_j^{(0)} x_j}- \sum_{j=1}^n {(\mu_j^{(j)} x_j)m_j^L} \leq 1-\beta
\end{aligned}
\end{align*}
can be replaced with the chance constraint.\\
Therefore a safe approximation of the robust counterpart to the portfolio model \eqref{eq4} is given by,
\begin{align}
\begin{aligned}	\label{eq10g}
	&\min_{x_i} && \frac{1}{2} \sum_{i=1}^{n} \sum_{j=1}^{n} \sigma_{ij} x_i x_j \\
	&\textrm{s.t.:} && 1+\tau-\sum_{j=1}^{n} {\mu_j^{(0)} x_j}- \sum_{j=1}^n {(\mu_j^{(j)} x_j)m_j^L} \leq 1-\beta,\\
	& && \sum_{j=1}^n {x_j}=1, \quad {x_j}\geq 0, \ j=1, 2, \dots, n.
\end{aligned}
\end{align}
The robust counterpart model \eqref{eq10g} is a quadratic programming problem, so it is computationally tractable.\\
The above result can be represented as the following theorem:
\begin{theorem} \label{theorem1}
Consider the ambiguous chance constrained portfolio optimization model \eqref{eq4}, in which the probability distribution of the perturbations $\zeta_j$, $(j=1, 2, \dots, n)$ are not known precisely, and the only information regarding each $\zeta_j$ is that it belongs to a family of perturbations having known upper $\&$ lower mean bounds $m_j^U$ $\&$ $m_j^L$ respectively. Then a safe tractable convex approximation to this model can be a quadratic programming problem given by,
\begin{align*}
\begin{aligned}	
&\min_{x_i} && \frac{1}{2} \sum_{i=1}^{n} \sum_{j=1}^{n} \sigma_{ij} x_i x_j \\
&\textrm{s.t.:} && 1+\tau-\sum_{j=1}^{n} {\mu_j^{(0)} x_j}- \sum_{j=1}^n {(\mu_j^{(j)} x_j)m_j^L} \leq 1-\beta,\\
& && \sum_{j=1}^n {x_j}=1, \quad {x_j}\geq 0, \ j=1, 2, \dots, n.
\end{aligned}
\end{align*}
\end{theorem}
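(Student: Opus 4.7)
The plan is to instantiate the generating-function framework of Theorem \ref{prop1} with the piecewise linear choice $\gamma(t)=[1+t]_+$ defined in eq. \eqref{eq10a}. First I would verify that $\gamma$ satisfies the required properties---non-negativity, monotonicity, convexity, $\gamma(0)=1$, $\gamma(t)\to 0$ as $t\to-\infty$, and the pointwise dominance $\gamma(t)\geq \chi(t)$---all of which follow from direct inspection of the piecewise definition. Theorem \ref{prop1} then reduces the task to exhibiting an efficiently computable upper bound $\psi^+(\bm{\mu},\bm{x})\geq E_{\bm{\zeta}\sim P}\!\left[\gamma(\tau-\bm{\mu}(\bm{\zeta})^\top \bm{x})\right]$ valid for every $P\in\mathcal{P}$; the single inequality $\psi^+(\bm{\mu},\bm{x})\leq 1-\beta$ then serves as a safe convex approximation of the ambiguous chance constraint in \eqref{eq4}.

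Second, I would expand the expectation using $\gamma(t)=\max\{0,1+t\}$ to rewrite the safe-approximation requirement as $E_{\bm{\zeta}\sim P}\!\left[\max\{0,\,1+\tau-\bm{\mu}(\bm{\zeta})^\top \bm{x}\}\right]\leq 1-\beta$, split this into the trivial zero-branch bound (which holds automatically because $\beta\in(0,1)$ gives $1-\beta\geq 0$) and the stronger linear bound $E_{\bm{\zeta}\sim P}\!\left[1+\tau-\bm{\mu}(\bm{\zeta})^\top \bm{x}\right]\leq 1-\beta$, and then substitute the affine parameterization $\bm{\mu}(\bm{\zeta})=\bm{\mu}^{(0)}+\sum_j \zeta_j\,\bm{\mu}^{(j)}$. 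By linearity of expectation the left-hand side becomes $1+\tau-\sum_j \mu_j^{(0)} x_j-\sum_j (\mu_j^{(j)} x_j)\,E_{\zeta_j\sim P_j}[\zeta_j]$, depending on $P$ only through the first moments of the components.

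Third, I would exploit the family description $m_j^L\leq E[\zeta_j]\leq m_j^U$ to replace each unknown first moment by the endpoint of the interval that makes the left-hand side largest, thereby producing a distribution-free $\psi^+$. Under the sign convention $\mu_j^{(j)} x_j\geq 0$ (natural when $\bm{x}\geq \bm{0}$ and the basic shifts have non-negative $j$th coordinate), the worst case is $E[\zeta_j]=m_j^L$, producing $\psi^+(\bm{\mu},\bm{x})=1+\tau-\sum_j \mu_j^{(0)} x_j-\sum_j (\mu_j^{(j)} x_j)\, m_j^L$. Pairing this linear-in-$\bm{x}$ constraint with the unchanged quadratic objective and simplex constraints yields the QP displayed in the theorem, and tractability follows because convex QPs are efficiently solvable.

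The main obstacle is more one of conservatism than of difficulty. The passage from the expected-positive-part bound to the plain expected-affine bound is sufficient rather than equivalent, and the correct endpoint of the mean interval to substitute for $E[\zeta_j]$ depends on $\mathrm{sign}(\mu_j^{(j)} x_j)$. If no sign assumption is made on the basic shifts, a componentwise case split is needed to pick $m_j^L$ or $m_j^U$ appropriately; the resulting $\psi^+$ remains linear in $\bm{x}$ and the approximation stays a tractable QP, but its coefficients take a slightly more involved form than the one stated in the theorem.
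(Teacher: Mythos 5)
Your proposal follows the paper's proof essentially step for step: the same piecewise linear generator $\gamma(t)=[1+t]_+$ with the same verification of its properties, the same replacement of the chance constraint by $E[\gamma(\tau-\bm{\mu}(\bm{\zeta})^\top\bm{x})]\leq 1-\beta$, the same split into the trivial zero branch and the affine branch, the same use of linearity of expectation, and the same substitution of the lower mean bound $m_j^L$ to obtain $\psi^+$ and hence the stated QP. The only place you go beyond the paper is your closing caveat about the sign of $\mu_j^{(j)}x_j$, and that is a fair sharpening rather than a deviation: the paper's passage to eq.~\eqref{eq10f} silently assumes $\mu_j^{(j)}x_j\geq 0$ (which does hold in its numerical example, where the basic shifts are non-negative and $\bm{x}\geq\bm{0}$), whereas you correctly note that without this sign condition one must pick $m_j^L$ or $m_j^U$ componentwise.
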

\subsubsection{Bernstein Approximation}
One of the first studies of the Bernstein approximation scheme was done by Nemirovski and Shapiro \cite{Nemirovski 2007}. The scheme has two assumptions:\\
(i) $\mathcal{P}$ can be written as,
\begin{align*}
	\mathcal{P}= \left \{ P_1 \times P_2 \times \dots \times  P_n: P_j \in \mathcal{P}_j, \  j= 1, 2, \dots, n \right \}
\end{align*}
where for $j= 1, 2, \dots, n$, the distributions ${P}_j$ are independent and belong to the families $\mathcal{P}_j$ of distributions.\\
(ii) The moment generating functions $M_j(t)=E[e^{t \zeta_j}]=\int e^{tz} dP_j(z)$ $(j=1, 2, \dots, n)$ are finite valued for all $t \in \mathbb{R}$ and are efficiently computable.\\
The generating function for this approximation is given by,
\begin{align}
\begin{aligned} \label{eq10h}
\gamma(t)=e^t
\end{aligned}
\end{align}
which is everywhere greater than or equal to $\chi(t)$. Further, it can be easily verified that $e^t$ satisfies the properties of generating function.\\
\begin{figure}[!htb]
	\centering
	\includegraphics[height=5cm,width=12cm]{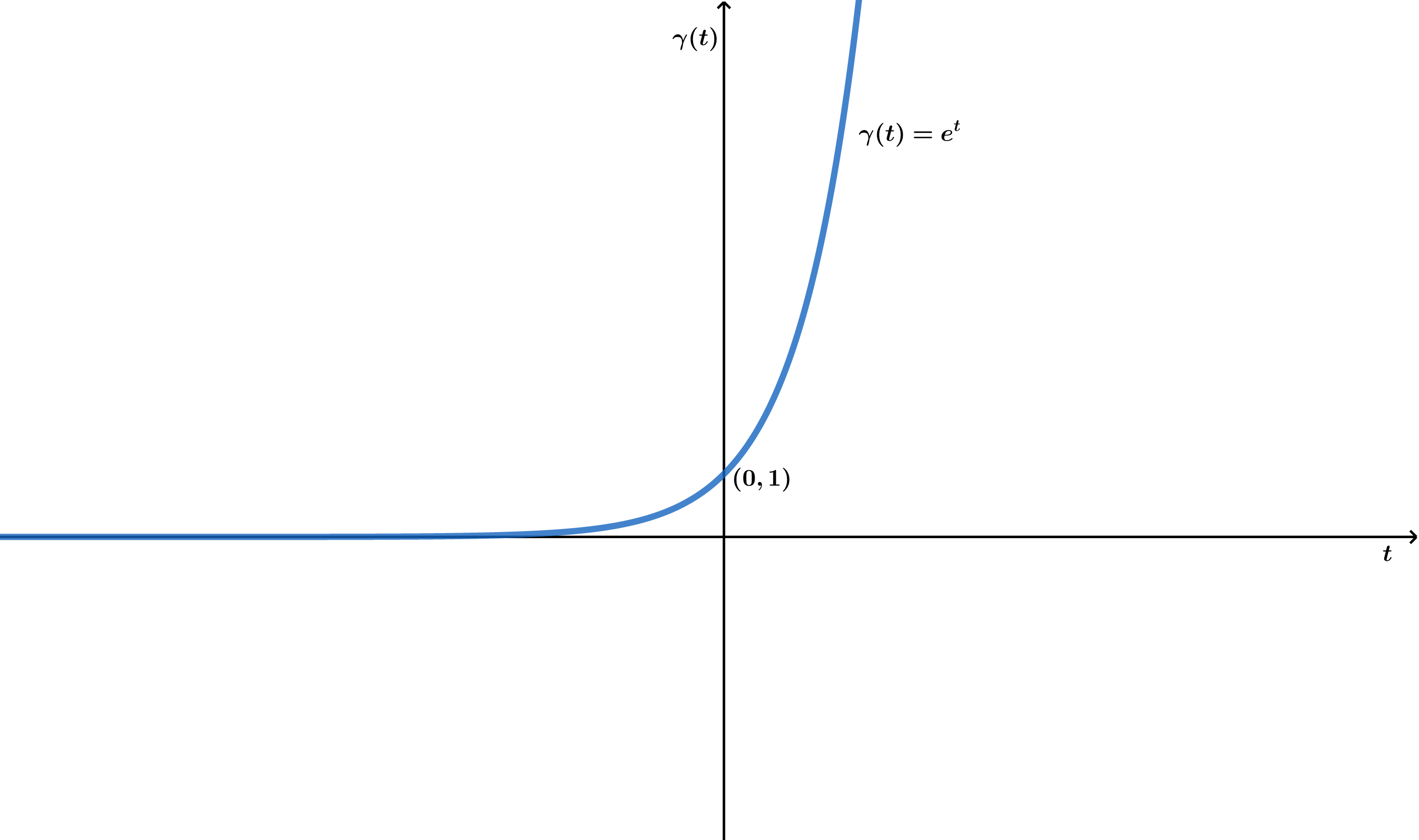}
	\caption{Exponential Generating Function}
	\label{fig2}
\end{figure}
By using the definition of $\gamma(.)$ given by \eqref{eq10h}, the constraint \eqref{eq10b} can be written as,
\begin{align}
\begin{aligned} \label{eq10i}
&  E_{{\bm{\zeta} \sim P} }\left [e^{\tau-\bm{\mu}(\bm{\zeta})^\top \bm x} \right ] \leq 1-\beta
\end{aligned}
\end{align}
Taking the logarithm in both sides of the constraint \eqref{eq10i} we get,
\begin{align}
\begin{aligned} \label{eq10j}
 & E_{{\bm{\zeta} \sim P} }\left [\tau-\bm{\mu}(\bm{\zeta})^\top \bm x \right ] \leq \log (1-\beta)
\end{aligned}
\end{align}
Now the left hand side of \eqref{eq10j} can be simplified in component form as,
\begin{align}
\begin{aligned} \label{eq11}
 E_{\bm{\zeta} \sim P} \left [{\tau-\bm{\mu}(\bm{\zeta})^\top \bm x} \right ] &=  E_{\bm{\zeta_j} \sim P_j} \left [{\tau-\sum_{j=1}^{n} \mu_j^{(0)} x_j-\sum_{j=1}^{n} (\mu_j^{(j)} x_j)\zeta_j} \right ]\\
& = \tau-\sum_{j=1}^{n} \mu_j^{(0)} x_j-\sum_{j=1}^{n} (\mu_j^{(j)} x_j)E_{\bm{\zeta_j} \sim P_j} \left [\zeta_j \right ]
\end{aligned}
\end{align}
Now our objective is to find out an upper bound of the right hand side of eq. \eqref{eq11}, which would be our $\psi^+(\bm{\mu}, \bm{x})$ given in Theorem \ref{prop1}. Since we have, $m_j^L \leq E_{{\zeta_j \sim P_j}}\left [\zeta_j \right ] \leq m_j^U$, it follows from eq. \eqref{eq11} that, 
\begin{align}
\begin{aligned} \label{eq11a}
E_{\bm{\zeta} \sim P} \left [{\tau-\bm{\mu}(\bm{\zeta})^\top \bm x} \right ] &\leq \tau-\sum_{j=1}^{n} \mu_j^{(0)} x_j-\sum_{j=1}^{n} (\mu_j^{(j)} x_j) m_j^L
\end{aligned}
\end{align} 
Then we get $\psi^+(\bm{\mu}, \bm{x})=\tau-\sum_{j=1}^{n} \mu_j^{(0)} x_j-\sum_{j=1}^{n} (\mu_j^{(j)} x_j) m_j^L$ and thus the safe approximation of the chance constraint becomes,
\begin{align*}
\tau-\sum_{j=1}^{n} \mu_j^{(0)} x_j-\sum_{j=1}^{n} (\mu_j^{(j)} x_j) m_j^L \leq \log(1-\beta)
\end{align*}
Therefore a safe approximation of the robust counterpart to the portfolio model \eqref{eq4} is given by,
\begin{align}
\begin{aligned} \label{eq12}
	&\min_{x_i} && \frac{1}{2} \sum_{i=1}^{n} \sum_{j=1}^{n} \sigma_{ij} x_i x_j \\
&\textrm{s.t.:} && \tau-\sum_{j=1}^{n} {\mu_j^{(0)} x_j}- \sum_{j=1}^n {(\mu_j^{(j)} x_j)m_j^L} \leq \log(1-\beta),\\
& && \sum_{j=1}^{n} {x_j}=1, \quad {x_j}\geq 0, \ j=1, 2, \dots, n.
\end{aligned}
\end{align}
The robust counterpart model \eqref{eq12} is a quadratic programming problem, so it is computationally tractable.\\ 
The above result can be represented as the following theorem:
\begin{theorem} \label{theorem2}
Consider the ambiguous chance constrained portfolio optimization model \eqref{eq4}, in which the probability distribution of the perturbations $\zeta_j$, $(j=1, 2, \dots, n)$ are not known precisely, and the only information regarding each $\zeta_j$ is that it belongs to a family of perturbations having known upper $\&$ lower mean bounds $m_j^U$ $\&$ $m_j^L$ respectively. Then a safe tractable convex approximation to this model can be a quadratic programming problem given by,
\begin{align*}
\begin{aligned}
&\min_{x_i} && \frac{1}{2} \sum_{i=1}^{n} \sum_{j=1}^{n} \sigma_{ij} x_i x_j \\
&\textrm{s.t.:} && \tau-\sum_{j=1}^{n} {\mu_j^{(0)} x_j}- \sum_{j=1}^n {(\mu_j^{(j)} x_j)m_j^L} \leq \log(1-\beta),\\
& && \sum_{j=1}^{n} {x_j}=1, \quad {x_j}\geq 0, \ j=1, 2, \dots, n.
\end{aligned}
\end{align*}
\end{theorem}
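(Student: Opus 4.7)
The plan is to apply the generating-function-based safe approximation framework of Theorem \ref{prop1} with the choice $\gamma(t) = e^t$. First I would verify that $\gamma$ is an admissible generating function: it is non-negative and monotone increasing, convex since $\gamma''(t) = e^t > 0$, it satisfies $\gamma(0) = 1 \geq 1$, and $\gamma(t) \to 0$ as $t \to -\infty$. Moreover $e^t \geq \chi(t)$ for every $t \in \mathbb{R}$ (since $e^t > 0$ trivially, and $e^t \geq 1$ for $t \geq 0$), so the majorization required in \eqref{eq7} holds. By Theorem \ref{prop1}, it therefore suffices to enforce
\begin{align*}
E_{\bm{\zeta} \sim P}\!\left[e^{\tau - \bm{\mu}(\bm{\zeta})^\top \bm{x}}\right] \leq 1-\beta, \quad \forall P \in \mathcal{P}.
\end{align*}

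Next I would transform this into a tractable constraint by taking logarithms on both sides and exploiting the affine decomposition $\bm{\mu}(\bm{\zeta}) = \bm{\mu}^{(0)} + \sum_j \zeta_j \bm{\mu}^{(j)}$. By linearity of expectation, the resulting inequality becomes
\begin{align*}
\tau - \sum_{j=1}^n \mu_j^{(0)} x_j - \sum_{j=1}^n (\mu_j^{(j)} x_j)\, E_{\zeta_j \sim P_j}[\zeta_j] \leq \log(1-\beta).
\end{align*}
Since the only distributional information available is the pair of mean bounds $m_j^L \leq E[\zeta_j] \leq m_j^U$, the worst-case (largest) value of the left-hand side across all $P \in \mathcal{P}$ is obtained by replacing each $E[\zeta_j]$ by $m_j^L$, exactly as in the argument used for the piecewise linear approximation in Theorem \ref{theorem1}. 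This identifies $\psi^+(\bm{\mu}, \bm{x}) = \tau - \sum_j \mu_j^{(0)} x_j - \sum_j (\mu_j^{(j)} x_j) m_j^L$ and yields the deterministic linear constraint appearing in the statement. Combining with the variance objective and the simplex constraints gives the stated quadratic program, whose tractability follows from positive semidefiniteness of $\bm{\Sigma}$ together with the linearity of the added constraint.

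The main subtlety I expect is in the log step and in the direction of the bound across the ambiguous family: one must justify that replacing $E[\zeta_j]$ by $m_j^L$ really does produce an upper bound of the original expectation, which hinges on the sign convention for the basic shifts $\bm{\mu}^{(j)}$ relative to the nonnegativity of $\bm{x}$. Once this is pinned down, the argument is a direct parallel of the proof of Theorem \ref{theorem1}, with the only structural difference being that the right-hand side $1-\beta$ of the Markov-type bound is replaced by $\log(1-\beta)$, which originates from the exponential generating function.
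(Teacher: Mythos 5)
Your proposal follows essentially the same route as the paper: take $\gamma(t)=e^{t}$, check the generating-function properties and the majorization $e^{t}\geq\chi(t)$, pass to the constraint $E\left[e^{\tau-\bm{\mu}(\bm{\zeta})^\top\bm{x}}\right]\leq 1-\beta$, take logarithms, expand by linearity of expectation, and bound $E[\zeta_j]$ from below by $m_j^L$ to obtain the stated linear constraint. The one subtlety you flag --- the logarithm step --- is handled in the paper exactly as you describe it: the logarithm is simply moved inside the expectation, turning $\log E[e^{Z}]$ into $E[Z]$ with $Z=\tau-\bm{\mu}(\bm{\zeta})^\top\bm{x}$. Neither you nor the paper justifies this, and the direction is in fact problematic: by Jensen's inequality $E[Z]\leq\log E[e^{Z}]$, so the constraint $E[Z]\leq\log(1-\beta)$ is \emph{implied by}, rather than \emph{implies}, the exponential-moment constraint; a genuine Bernstein argument would instead upper-bound $\log E[e^{Z}]$ through the moment generating functions $M_j$ (which is why the paper lists their computability as an assumption, though it never uses them). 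Similarly, replacing $E[\zeta_j]$ by $m_j^L$ to get an upper bound presupposes $\mu_j^{(j)}x_j\geq 0$, which you correctly identify as the hinge and which the paper leaves implicit (it holds here because $x_j\geq 0$ and the basic shifts are taken nonnegative). In short, your attempt reproduces the paper's proof, including the step it does not fully justify.
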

\subsection{Family of Perturbation Distributions with Known Upper $\&$ Lower Mean Bounds and Standard Deviations} \label{sec4.2}
In this case we consider the family of $\bm{\zeta}=[\zeta_1, \zeta_2, \dots, \zeta_n]^\top$ with the upper $\&$ lower bounds of each $\zeta_j$, $(j=1, 2, \dots, n)$ are known to be $m_j^U$ $\&$ $m_j^L$ respectively and the standard deviation is known to be $s_j$. For deriving the safe convex approximation we use a piecewise quadratic generating function.
\subsubsection{Piecewise Quadratic Generating Function Based Approximation}
The function
\begin{align}
\begin{aligned} \label{eq12a}
	\gamma(t)=\left \{ \begin{array}{ll}0 &\mbox{ for } t \leq -1\\
		(1+t)^2 &\mbox{ for } t > -1 \end{array} \right.
\end{aligned}
\end{align} 
is everywhere greater than or equal to $\chi(t)$. Further, it
is a non-negative and  non-decreasing convex function satisfying $\gamma(0) \geq 1$ and $\lim_{t \to -\infty}\gamma(t) =0$ and hence can be considered as a generating function.\\
\begin{figure}[htb]
	\centering
	\includegraphics[height=5cm,width=12cm]{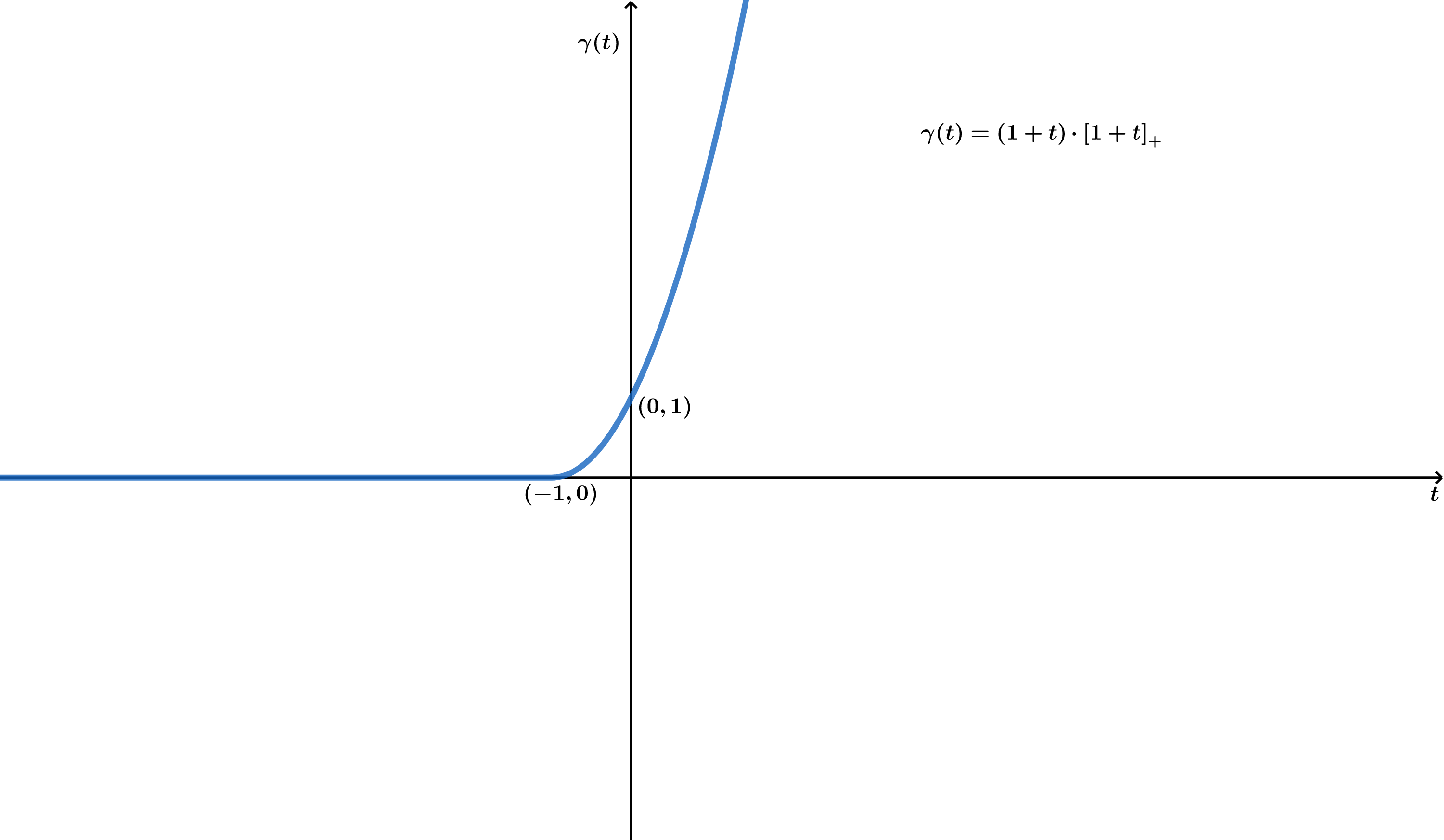}
	\caption{Piecewise Quadratic Generating Function}
	\label{fig3}
\end{figure}
We can also rewrite $\gamma(t)$ given in \eqref{eq12a} as,
\begin{align}
\begin{aligned} \label{eq12b}
&	\gamma(t)=(1+t) \cdot [1+t]_+=(1+t) \cdot \max \left \{0, 1+t \right \}
\end{aligned}
\end{align}
Similar to the previous two approximations, let's begin with the constraint
\begin{align}
\begin{aligned} \label{eq12c}
& E_{\bm{\zeta \sim P}} \left [ \gamma(\tau-\bm{\mu}(\bm{\zeta})^\top x) \right ] \leq 1-\beta
\end{aligned}
\end{align}
By the use of \eqref{eq12b}, the constraint \eqref{eq12c} becomes,
\begin{align}
\begin{aligned} \label{eq12d}
	 &  E_{\bm{\zeta \sim P}}  \left [(1+\tau-\bm{\mu}(\bm{\zeta})^\top x)\cdot \max \left \{0,(1+\tau-\bm{\mu}(\bm{\zeta})^\top x) \right \} \right ]  \leq 1-\beta
\end{aligned}
\end{align}
Now here arises two cases for $\max \left \{0, 1+\tau-\bm{\mu}(\bm{\zeta})^\top x \right \}$.\\
\textbf{Case I:} When $\max \left \{0, 1+\tau-\bm{\mu}(\bm{\zeta})^\top x \right \}=0$\\
Then the constraint \eqref{eq12d} becomes,
\begin{align*}
	\begin{aligned}
	 E_{\bm{\zeta \sim P}} \left [0 \right ]  \leq 1-\beta
	\end{aligned}
\end{align*}
\textbf{Case II:} When $\max \left \{0, 1+\tau-\bm{\mu}(\bm{\zeta})^\top x \right \}=1+\tau-\bm{\mu}(\bm{\zeta})^\top x$\\
Then the constraint \eqref{eq12d} becomes,
\begin{align*}
	\begin{aligned}
		E_{\bm{\zeta \sim P}} \left [1+\tau-\bm{\mu}(\bm{\zeta})^\top x \right ]^2  \leq 1-\beta
	\end{aligned}
\end{align*}
So the constraint \eqref{eq12d} can be replaced by two equivalent constraints,
\begin{subequations}
\begin{align}
	&  E_{\bm{\zeta \sim P}} \left [0 \right ] \leq 1-\beta \label{eq13i}\\
	& E_{\bm{\zeta \sim P}}  \left [1+\tau-\bm{\mu}(\bm{\zeta})^\top x \right ]^2  \leq 1-\beta \label{eq13ii}
\end{align}
\end{subequations}
Here the constraint \eqref{eq13i} is trivial, as by the assumption of chance constraint problems we have $1-\beta \geq 0$. Therefore now we only need to proceed with the constraint \eqref{eq13ii}.\\
Now the left hand side of \eqref{eq13ii} can be simplified in component form as,
\begin{align}
\begin{aligned} \label{eq17}
 E_{\bm{\zeta} \sim P} \left [1+\tau-\bm{\mu}(\bm{\zeta})^\top \bm x \right ]^2
	& = E_{\zeta_j \sim P_j} \left [1+\tau-\left (\sum_{j=1}^{n} \mu_j^{(0)} x_j+\sum_{j=1}^{n} (\mu_j^{(j)} x_j)\zeta_j\right ) \right ]^2\\
	& = E_{\zeta_j \sim P_j} \left [(1+\tau-\sum_{j=1}^{n} \mu_j^{(0)} x_j)-\sum_{j=1}^{n} (\mu_j^{(j)} x_j)\zeta_j \right ]^2\\
	& = E_{\zeta_j \sim P_j} \left [ \left (1+\tau-\sum_{j=1}^{n} \mu_j^{(0)} x_j \right )^2+ \left (\sum_{j=1}^{n} (\mu_j^{(j)} x_j)\zeta_j \right )^2 \right. \\
	& \qquad \qquad \qquad \qquad \qquad \left. -2\left (1+\tau-\sum_{j=1}^{n}\mu_j^{(0)}x_j\right ) \cdot \left (\sum_{j=1}^{n}(\mu_j^{(j)}x_j)\zeta_j \right )\right ]
\end{aligned}
\end{align}
Since in our problem the perturbations $\zeta_j$'s are the random variables, the terms without $\zeta_j$ can be treated as constants. So the eq. \eqref{eq17} can be written as,
\begin{align}
	\begin{aligned} \label{eq18}
	&	E_{\bm{\zeta} \sim P} \left [1+\tau-\bm{\mu}(\bm{\zeta})^\top \bm x \right ]^2= &&(1+\tau-\sum_{j=1}^{n} \mu_j^{(0)} x_j)^2+ E_{\zeta_j \sim P_j} \left [\left (\sum_{j=1}^{n} (\mu_j^{(j)} x_j)\zeta_j \right )\right ]^2\\
		& &&-2\left (1+\tau-\sum_{j=1}^{n}\mu_j^{(0)}x_j\right )  \cdot E_{\zeta_j \sim P_j} \left [ \sum_{j=1}^{n}(\mu_j^{(j)}x_j)\zeta_j\right ]
	\end{aligned}
\end{align}
Now our objective is to find out an upper bound of the right hand side of eq. \eqref{eq18}, which would be our $\psi^+(\bm{\mu}, \bm{x})$ given in Theorem \ref{prop1}. For this, we use the following lemma.
\begin{lemma} \label{lemma1}
Let $\zeta_j, \ j=1, 2, \dots, n$ be independently distributed random variables, each with its upper $\&$ lower mean bounds are respectively $m_j^U$ $\&$ $m_j^L$ and its standard deviation is $s_j$. Then we have,
\begin{align*}
\begin{aligned}
& (i) &&\sum_{j=1}^{n}(\mu_j^{(j)} x_j)m_j^L \leq E_{\zeta_j \sim P_j} \left [ \sum_{j=1}^{n}(\mu_j^{(j)}x_j)\zeta_j \right ]\leq \sum_{j=1}^{n}(\mu_j^{(j)} x_j)m_j^U,\\
& &&\text{ and, }\\
& (ii) &&\sum_{j=1}^{n}(\mu_j^{(j)}x_j)^2 s_j^2+ \left [\sum_{j=1}^{n}(\mu_j^{(j)} x_j)m_j^L\right ]^2 \leq E_{\zeta_j \sim P_j} \left [ \sum_{j=1}^{n}(\mu_j^{(j)}x_j)\zeta_j \right ]^2\leq \sum_{j=1}^{n}(\mu_j^{(j)}x_j)^2 s_j^2+ \left [\sum_{j=1}^{n}(\mu_j^{(j)} x_j)m_j^U\right ]^2
\end{aligned}
\end{align*}
\end{lemma}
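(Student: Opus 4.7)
The plan is to exploit two basic facts: linearity of expectation for part (i), and, for part (ii), the assumption from the Bernstein scheme that the $\zeta_j$'s are independently distributed, so that cross moments factor. Throughout I would write $a_j := \mu_j^{(j)} x_j$ for brevity and treat these coefficients as non-negative, which is consistent with the portfolio setup (the weights $x_j$ are nonnegative by the problem's constraints, and the basic shifts $\mu_j^{(j)}$ are taken with a sign convention that makes $a_j \geq 0$).

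For part (i), I would simply apply linearity of expectation to write $E_{\zeta_j \sim P_j}\bigl[\sum_j a_j \zeta_j\bigr] = \sum_j a_j \, E_{\zeta_j \sim P_j}[\zeta_j]$. The given information $m_j^L \leq E[\zeta_j] \leq m_j^U$ then multiplies termwise by the non-negative scalar $a_j$ and sums to give both inequalities in (i) at once.

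For part (ii), I would expand the square inside the expectation as a double sum, $E\bigl[(\sum_j a_j \zeta_j)^2\bigr] = \sum_{i,j} a_i a_j E[\zeta_i \zeta_j]$, and split it into diagonal and off-diagonal contributions. Independence of the $\zeta_j$'s yields $E[\zeta_i \zeta_j] = E[\zeta_i]E[\zeta_j]$ whenever $i \neq j$, while for $i=j$ the identity $E[\zeta_j^2] = \mathrm{Var}(\zeta_j) + E[\zeta_j]^2 = s_j^2 + E[\zeta_j]^2$ introduces the variance contribution. Recombining the two pieces reconstructs the full double sum for the squared-mean term and produces the clean decomposition
\begin{equation*}
E_{\zeta_j \sim P_j}\Bigl[\sum_j a_j \zeta_j\Bigr]^2 \;=\; \sum_j a_j^2 s_j^2 \;+\; \Bigl(\sum_j a_j\, E[\zeta_j]\Bigr)^2.
\end{equation*}
At this point part (i) bounds the quantity inside the remaining square between $\sum_j a_j m_j^L$ and $\sum_j a_j m_j^U$, and squaring these affine expressions (using the sign convention on the coefficients) transfers the bounds to the squared term, yielding the inequalities in (ii).

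The main obstacle is the algebraic bookkeeping in part (ii): carefully separating diagonal from off-diagonal terms, correctly invoking independence, and recognising that the resulting ``mean-squared'' piece reassembles into the square $(\sum_j a_j E[\zeta_j])^2$ which can then be bounded using part (i). The squaring step is also where one must be explicit about sign conventions for $a_j m_j^L$ and $a_j m_j^U$ so that the stated inequality direction is preserved; this is a notational rather than a conceptual difficulty, and once handled the rest of the proof is a direct moment computation under independence.
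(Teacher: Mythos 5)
Your proof is essentially the paper's: part (i) is linearity of expectation with nonnegative coefficients, and part (ii) rests on the identity $E[Y^2]=\mathrm{Var}[Y]+(E[Y])^2$ with independence annihilating the cross terms, where $Y=\sum_j(\mu_j^{(j)}x_j)\zeta_j$ --- the paper quotes the variance-of-a-sum formula where you expand the double sum $\sum_{i,j}a_ia_jE[\zeta_i\zeta_j]$ directly, which is the same computation. The sign caveat you raise at the final squaring step is a real issue rather than a notational one (from $L\le E[Y]\le U$ one gets $L^2\le (E[Y])^2\le U^2$ only when $L\ge 0$, and the paper's own numerical data take $m_j^L=-m_j^U<0$), but the paper's proof performs exactly the same unjustified transfer, so your argument matches the source in both its route and its one weak point.
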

\begin{proof} We first denote the random variable  $\sum_{j=1}^{n}(\mu_j^{(j)}x_j)\zeta_j$ by $Y$. So our objective is to find out the bounds of $E \left [Y \right ]$ and $E \left [Y^2 \right ]$.\\
Now the mean of $Y$ is given by,
\begin{align}
	\begin{aligned} \label{eq19}
		E \left [Y \right ] 
		& = \sum_{j=1}^{n}(\mu_j^{(j)}x_j) \cdot E_{\zeta_j \sim P_j} \left [ \zeta_j \right ]
	\end{aligned}
\end{align}
Using the fact that $m_j^L \leq E_{\zeta_j \sim P_j} \left [ \zeta_j \right ] \leq m_j^U$ in \eqref{eq19}, we get,
\begin{align}
\begin{aligned} \label{eq19a}
\sum_{j=1}^{n}(\mu_j^{(j)} x_j)m_j^L \leq E \left [Y \right ] \leq \sum_{j=1}^{n}(\mu_j^{(j)} x_j)m_j^U,
\end{aligned}
\end{align}
which proves (i).\\
Now the variance of $Y$ is given by,
\begin{align}
	\begin{aligned} \label{eq20}
		Var \left [Y \right ] 
		& = Var \left [\sum_{j=1}^{n}(\mu_j^{(j)}x_j)\zeta_j \right ] \\
		& = \sum_{j=1}^{n}(\mu_j^{(j)}x_j)^2 \cdot Var \left [\zeta_j \right ]+ 2 \cdot \displaystyle{\sum_{\substack{i,j \\ i\neq j}}} (\mu_i^{(i)}x_i) (\mu_j^{(j)}x_j) \cdot  Cov(\zeta_i, \zeta_j)
	\end{aligned}
\end{align}
Since $\zeta_j$'s are independently distributed random variables, we have $Cov(\zeta_i, \zeta_j)=0$, which reduces \eqref{eq20} to,
\begin{align}
\begin{aligned} \label{eq20a}
Var \left [Y \right ] 
& = \sum_{j=1}^{n}(\mu_j^{(j)}x_j)^2 s_j^2
\end{aligned}
\end{align}
Again from the variance formula we have,
\begin{align}
	\begin{aligned} \label{eq21}
	E\left [Y^2\right ]=Var\left [Y \right ]+\left (E\left [Y \right ]\right )^2
	\end{aligned}
\end{align}
Using \eqref{eq19a} and \eqref{eq20a} in \eqref{eq21}, we obtain
\begin{align}
\begin{aligned} \label{eq21a}
 & \sum_{j=1}^{n}(\mu_j^{(j)}x_j)^2 s_j^2+ \left[\sum_{j=1}^{n}(\mu_j^{(j)} x_j)m_j^L\right]^2 \leq E \left [ Y \right ]^2\leq \sum_{j=1}^{n}(\mu_j^{(j)}x_j)^2 s_j^2+ \left[\sum_{j=1}^{n}(\mu_j^{(j)} x_j)m_j^U\right]^2,
\end{aligned}
\end{align}
and this proves (ii).
\qed
\end{proof}
Now applying the results (i) and (ii) of Lemma \ref{lemma1} in eq. \eqref{eq18} we get,
\begin{align}
\begin{aligned} \label{eq22}
E_{\bm{\zeta} \sim P} \left [1+\tau-\bm{\mu}(\bm{\zeta})^\top \bm x \right ]^2 \leq &\left(1+\tau-\sum_{j=1}^{n} \mu_j^{(0)} x_j\right)^2+ \sum_{j=1}^{n}(\mu_j^{(j)} x_j)^2s_j^2+\left(\sum_{j=1}^{n}(\mu_j^{(j)} x_j)m_j^U\right)^2\\
& \qquad -2(1+\tau-\sum_{j=1}^{n}\mu_j^{(0)}x_j)  \cdot \left (\sum_{j=1}^{n}(\mu_j^{(j)} x_j)m_j^L\right )
\end{aligned}
\end{align}
Then we get $\psi^+(\bm{\mu}, \bm{x})=\left(1+\tau-\sum_{j=1}^{n} \mu_j^{(0)} x_j\right)^2+ \sum_{j=1}^{n}(\mu_j^{(j)} x_j)^2s_j^2+\left (\sum_{j=1}^{n}(\mu_j^{(j)} x_j)m_j^U\right )^2-2(1+\tau-\sum_{j=1}^{n}\mu_j^{(0)}x_j)  \cdot \left (\sum_{j=1}^{n}(\mu_j^{(j)} x_j)m_j^L\right )$ and thus the safe approximation of the chance constraint becomes,
\begin{align*}
\begin{aligned}	
	& \left(1+\tau-\sum_{j=1}^{n} \mu_j^{(0)} x_j\right)^2+ \sum_{j=1}^{n}(\mu_j^{(j)} x_j)^2s_j^2+\left (\sum_{j=1}^{n}(\mu_j^{(j)} x_j)m_j^U\right )^2-2(1+\tau-\sum_{j=1}^{n}\mu_j^{(0)}x_j)  \cdot \left (\sum_{j=1}^{n}(\mu_j^{(j)} x_j)m_j^L\right )\\
	& \qquad \qquad \qquad \qquad \qquad \qquad \qquad \qquad \qquad \qquad \qquad \qquad \qquad \qquad \qquad \qquad \qquad \qquad \qquad \leq 1-\beta
\end{aligned}
\end{align*}
Therefore a safe approximation of the robust counterpart to the portfolio model \eqref{eq4} is given by,
\begin{align}
\begin{aligned}	\label{eq22a}
	&\min_{x_i} && \frac{1}{2} \sum_{i=1}^{n} \sum_{j=1}^{n} \sigma_{ij} x_i x_j \\
	&\textrm{s.t.:} &&\left(1+\tau-\sum_{j=1}^{n} \mu_j^{(0)} x_j\right)^2+ \sum_{j=1}^{n}(\mu_j^{(j)} x_j)^2s_j^2+\left (\sum_{j=1}^{n}(\mu_j^{(j)} x_j)m_j^U\right )^2\\
	& && \qquad \qquad \qquad -2(1+\tau-\sum_{j=1}^{n}\mu_j^{(0)}x_j)  \cdot \left (\sum_{j=1}^{n}(\mu_j^{(j)} x_j)m_j^L\right ) \leq 1-\beta,\\
	& &&\sum_{j=1}^{n} {x_j}=1, \quad {x_j}\geq 0, \ j=1, 2, \dots, n.
\end{aligned}
\end{align}
The robust counterpart model \eqref{eq22a} is a quadratically constrained quadratic programming (QCQP) problem, so it is computationally tractable.\\
The above result can be represented as the following theorem:
\begin{theorem} \label{theorem3}
Consider the ambiguous chance constrained portfolio optimization model \eqref{eq4}, in which the probability distribution of the perturbations $\zeta_j$, $(j=1, 2, \dots, n)$ are not known precisely, and the only information regarding each $\zeta_j$ is that it belongs to a family of perturbations having known upper $\&$ lower mean bounds, and the standard deviation $m_j^U$ $\&$ $m_j^L$, and $s_j$ respectively. Then a safe tractable convex approximation to this model can be a quadratic programming problem given by,
\begin{align*}
\begin{aligned}	
&\min_{x_i} && \frac{1}{2} \sum_{i=1}^{n} \sum_{j=1}^{n} \sigma_{ij} x_i x_j \\
&\textrm{s.t.:} &&\left(1+\tau-\sum_{j=1}^{n} \mu_j^{(0)} x_j\right)^2+ \sum_{j=1}^{n}(\mu_j^{(j)} x_j)^2s_j^2+\left (\sum_{j=1}^{n}(\mu_j^{(j)} x_j)m_j^U\right )^2\\
& && \qquad \qquad \qquad -2(1+\tau-\sum_{j=1}^{n}\mu_j^{(0)}x_j)  \cdot \left (\sum_{j=1}^{n}(\mu_j^{(j)} x_j)m_j^L\right ) \leq 1-\beta,\\
& &&\sum_{j=1}^{n} {x_j}=1, \quad {x_j}\geq 0, \ j=1, 2, \dots, n.
\end{aligned}
\end{align*}
\end{theorem}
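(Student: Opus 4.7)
The plan is to apply the generating-function framework recorded in Theorem \ref{prop1} to the piecewise quadratic function $\gamma(t)=(1+t)\cdot[1+t]_+$ given in \eqref{eq12a}--\eqref{eq12b}. First I would verify that $\gamma$ satisfies the three required properties (non-negativity, monotonicity and convexity; $\gamma(0)=1$; and $\gamma(t)\to 0$ as $t\to -\infty$), which is immediate from the piecewise definition, and that $\gamma(t)\ge\chi(t)$ pointwise. Consequently $E_{\bm{\zeta}\sim P}[\gamma(\tau-\bm{\mu}(\bm{\zeta})^\top\bm{x})]$ is an admissible choice for $\psi(\bm{\mu},\bm{x})$, and it suffices to exhibit a deterministic upper bound $\psi^+(\bm{\mu},\bm{x})$ for it that is (a) convex in $\bm{x}$ and (b) efficiently computable from the data $\{\mu_j^{(0)},\mu_j^{(j)},m_j^L,m_j^U,s_j\}$.

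Next, as in \eqref{eq12d}--\eqref{eq13ii}, I would split the expectation by the two branches of the $\max$: the $0$-branch gives the trivial inequality $0\le 1-\beta$, so the non-trivial reduction is to $E_{\bm{\zeta}\sim P}[(1+\tau-\bm{\mu}(\bm{\zeta})^\top\bm{x})^2]\le 1-\beta$. Introducing the shorthand $A=1+\tau-\sum_j\mu_j^{(0)}x_j$ and $Y=\sum_j(\mu_j^{(j)}x_j)\zeta_j$, expanding the square yields $E[(A-Y)^2]=A^2-2A\cdot E[Y]+E[Y^2]$, so I need to control $E[Y]$ and $E[Y^2]$. This is exactly where Lemma \ref{lemma1} enters: part (i) provides the two-sided bound $\sum_j(\mu_j^{(j)}x_j)m_j^L\le E[Y]\le\sum_j(\mu_j^{(j)}x_j)m_j^U$, and part (ii), via independence and the variance decomposition $E[Y^2]=\mathrm{Var}[Y]+(E[Y])^2$ with $\mathrm{Var}[Y]=\sum_j(\mu_j^{(j)}x_j)^2 s_j^2$, delivers the upper bound on $E[Y^2]$ that appears in the statement.

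Combining these bounds, $E[Y^2]$ is replaced by $\sum_j(\mu_j^{(j)}x_j)^2 s_j^2+(\sum_j(\mu_j^{(j)}x_j)m_j^U)^2$, and $-2A\cdot E[Y]$ is replaced by $-2A\cdot\sum_j(\mu_j^{(j)}x_j)m_j^L$, which gives precisely the $\psi^+(\bm{\mu},\bm{x})$ displayed in \eqref{eq22}. Substituting this quantity into the chance-constraint surrogate $\psi^+(\bm{\mu},\bm{x})\le 1-\beta$ and combining with the original budget and sign constraints produces the optimization problem \eqref{eq22a}. Tractability then follows by inspection: the objective is convex quadratic in $\bm{x}$, and the constraint is a sum of a square of a linear form, a weighted sum of squares $\sum_j(\mu_j^{(j)}x_j)^2 s_j^2$, a square of a linear form, minus a product of two linear forms, hence a convex quadratic inequality in $\bm{x}$, making \eqref{eq22a} a genuine QCQP solvable in polynomial time.

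The one delicate step, which I would single out as the main obstacle, is the bound on the cross term $-2A\cdot E[Y]$: replacing $E[Y]$ by its lower bound $\sum_j(\mu_j^{(j)}x_j)m_j^L$ gives an upper bound on $-2A\cdot E[Y]$ only under a sign assumption on $A$ (and on $\sum_j(\mu_j^{(j)}x_j)m_j^L$). In the portfolio setting one typically argues that, for any candidate solution of interest, $A=1+\tau-\sum_j\mu_j^{(0)}x_j>0$ because the chance constraint forces $\sum_j\mu_j^{(0)}x_j$ to stay close to $\tau$; together with the non-negativity of $\bm{x}$ and standard normalization of the shifts $\mu_j^{(j)}$, this justifies the substitution and keeps the resulting quadratic form convex. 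With that caveat handled, the remainder of the argument is a direct assembly of Theorem \ref{prop1} with Lemma \ref{lemma1}, and the conclusion of Theorem \ref{theorem3} follows.
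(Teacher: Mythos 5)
Your proposal follows the paper's own derivation essentially step for step: the same piecewise quadratic generator, the same branch split of the $\max$, the same expansion $E[(A-Y)^2]=A^2-2A\,E[Y]+E[Y^2]$, and the same invocation of Lemma \ref{lemma1} to produce the $\psi^+$ appearing in \eqref{eq22} and hence the QCQP \eqref{eq22a}. The one place you go beyond the paper is in flagging that replacing $E[Y]$ by its lower bound $\sum_j(\mu_j^{(j)}x_j)m_j^L$ in the cross term $-2A\,E[Y]$ yields an upper bound only when $A=1+\tau-\sum_j\mu_j^{(0)}x_j\ge 0$; the paper performs this substitution silently, so your caveat identifies a hypothesis the published argument leaves implicit rather than a defect in your own reasoning.
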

\subsection{Extension to Higher Orders}
In Section \ref{sec4.1}, the considered perturbation family has the information regarding first-moment statistics of the distribution, that is, the mean. On the other hand, the perturbation family in Section \ref{sec4.2} has information regarding the first two moments of the distribution, that is, the mean and the variance (or the standard deviation). A piecewise linear function \eqref{eq10a} is used to derive the safe convex approximation for the former family, whereas a piecewise quadratic function \eqref{eq12a} is used for the latter family.\\ Inductively, if we have the information regarding the first $n$ moments of the perturbation distribution, then we can use the piecewise generating function of degree $n$ given by,
\begin{align}
\begin{aligned} \label{eq22aa}
\gamma(t)=\left \{ \begin{array}{ll}0 &\mbox{ for } t \leq -1\\
(1+t)^n &\mbox{ for } t > -1 \end{array} \right.
\end{aligned}
\end{align}
to derive the safe convex approximation.\\
This is because, while deriving the left hand side of the chance constraint componentwise by using the above generating function, we obtain
\begin{align}
\begin{aligned} \label{eq22aaa}
&	E_{\bm{\zeta} \sim P} \left [1+\tau-\bm{\mu}(\bm{\zeta})^\top \bm x \right ]^n= && E_{\zeta_j \sim P_j} \left [(1+\tau-\sum_{j=1}^{n} \mu_j^{(0)} x_j)-\sum_{j=1}^{n} (\mu_j^{(j)} x_j)\zeta_j \right ]^n
\end{aligned}
\end{align}
After applying the binomial expansion to the right hand side of eq. \eqref{eq22aaa}, we require the information regarding the values of $E_{\zeta_j \sim P_j} [\zeta_j]$, $E_{\zeta_j \sim P_j} [\zeta_j^2]$, $\dots$, $E_{\zeta_j \sim P_j} [\zeta_j^n]$ to derive the safe convex approximation of the uncertain model. Therefore the generating function \eqref{eq22aa} can be applied for the family of perturbations, whose first $n$ moments' information is known.
\section{Numerical Example} \label{section5}
In this section, we consider an Indian stock market problem for our study. The objective is to find out the optimal allocation by using the safe approximation models which we derive in the last section. Finally, we compare the obtained results. 
\subsection{Data Description}
The stock price data of three major sectors of India such as Nifty Bank, Nifty Infra, and Nifty IT are taken into consideration for our numerical example. The data are collected for the period June 2017 to May 2022 from \href{https://finance.yahoo.com}{https://finance.yahoo.com}. The quarterly returns of the sectors are derived from their stock price data. Using these we calculate the input parameters for the portfolio model namely, the expected returns of the sectors and the covariance of returns between each pair of sectors. The calculated input parameters are the nominal values, which are given in Table \ref{table1}.\\
\begin{table}[htb]
	\centering
	\caption{Nominal Values of the Input Parameters}
	\begin{tabular}{| c | c | c  c  c |}
		\hline
		\cline{1-5}
		\textbf{Sectors} &  {\textbf{Expected Returns (in $\%$)}} & \multicolumn{3}{c |} {\textbf{Covariance of Returns (in $\%$)}} \\
		\cline{3-5}
		{} & {} & Nifty Bank & Nifty Infra & Nifty IT  \\
		\hline
		Nifty Bank & $2.609$ & $24.126 $ & $-1.460 $ & $11.032$  \\
		
		Nifty Infra & $-1.430$ & $-1.460 $ & $8.237$ & $0.461$   \\
		
		Nifty IT & $6.329 $ & $11.032$ & $0.461$ & $18.034$   \\
		
		\hline
	\end{tabular}
	\label{table1}
\end{table}
\subsection{Problem Formulation}
First, we construct the nominal problem with the input parameters given in Table \ref{table1}. The problem is given by,
\begin{align} 
\begin{aligned} \label{eq22b}	
&\min_{x_i} && \frac{1}{2} \left[24.126x_1^2+8.237x_2^2+18.034x_3^2+ 2 \cdot (-1.460) x_1x_2+2 \cdot 11.032 x_1x_3+ 2 \cdot 0.461 x_2x_3 \right]\\
& s.t.: && 2.609x_1-1.430x_2+6.329x_3 \geq \tau,\\
& &&x_1+x_2+x_3=1, \quad x_1, x_2, x_3 \geq 0 
\end{aligned}
\end{align}
Taking the uncertainty in expected returns into account, we aim to formulate the problem with ambiguous chance constraint. Let $\bm{\zeta}=[\zeta_1 \ \zeta_2 \  \zeta_3]^\top$ be the perturbation vector associated with the expected returns.\\
Here the nominal expected return vector is given by,
\begin{align*}
	\bm{{\bm{\mu}^{(0)}}} = \begin{bmatrix} 
		\mu_1^{(0)} \\
		\mu_2^{(0)}\\
		\mu_3^{(0)} \\
	\end{bmatrix}
	= \begin{bmatrix} 
		2.609 \\
		-1.430 \\
		6.329 \\
	\end{bmatrix}
\end{align*}
Let the basic shifts of the perturbations are assumed to be,
\begin{align*}
	\bm{{\bm{\mu}^{(1)}}} = \begin{bmatrix} 
		\mu_1^{(1)} \\
		\mu_2^{(1)}\\
		\mu_3^{(1)} \\
	\end{bmatrix}
	= \begin{bmatrix} 
		0.2 \\
		0 \\
		0 \\
	\end{bmatrix}, 
	\quad 
	\bm{{\bm{\mu}^{(2)}}} = \begin{bmatrix} 
		\mu_1^{(2)} \\
		\mu_2^{(2)}\\
		\mu_3^{(2)} \\
	\end{bmatrix}
	= \begin{bmatrix} 
		0 \\
		0.1 \\
		0 \\
	\end{bmatrix},
	\quad
	\bm{{\bm{\mu}^{(3)}}} = \begin{bmatrix} 
		\mu_1^{(3)} \\
		\mu_2^{(3)}\\
		\mu_3^{(3)} \\
	\end{bmatrix}
	= \begin{bmatrix} 
		0 \\
		0 \\
		0.3 \\
	\end{bmatrix}.
\end{align*}
Then the uncertain expected return of the portfolio is given by,
\begin{align*}
\begin{aligned}
&[\bm{\mu}(\bm{\zeta})]^\top \bm{x}&&= \left [ \bm{\mu}^{(0)}+\sum_{j=1}^{3}\zeta_j \bm{\mu}^{(j)} \right ]^\top \bm x\\
& &&= \begin{bmatrix} 
2.609+0.2\zeta_1 \\
-1.430+0.1\zeta_2 \\
6.329+0.3\zeta_3 \\
\end{bmatrix}^\top \begin{bmatrix} 
x_1 \\
x_2 \\
x_3 \\
\end{bmatrix}\\
& &&= 2.609x_1-1.430x_2+6.329x_3+0.2x_1 \zeta_1+0.1 x_2 \zeta_2+ 0.3x_3 \zeta_3
\end{aligned}
\end{align*}
Therefore the ambiguous chance constrained problem for $\beta=0.95$ is given by,
\begin{align} 
\begin{aligned}	\label{eq23}
&\min_{x_i} && \frac{1}{2} \left[24.126x_1^2+8.237x_2^2+18.034x_3^2+ 2 \cdot (-1.460) x_1x_2+2 \cdot 11.032 x_1x_3+ 2 \cdot 0.461 x_2x_3 \right]\\
& s.t.: && \forall P \in \mathcal{P}: {Prob}_{\bm{\zeta} \sim P} \left \{2.609x_1-1.430x_2+6.329x_3+0.2x_1 \zeta_1+0.1 x_2 \zeta_2+ 0.3x_3 \zeta_3\geq \tau \right \} \geq 0.95\\
&  && x_1+x_2+x_3=1, \quad x_1, x_2, x_3 \geq 0 
\end{aligned}
\end{align}
We aim to solve the above ambiguous chance constrained problem for the families of perturbation distributions by using the safe approximations discussed in Section \ref{section4}.
\subsection{Optimal Solutions for Several Families of Perturbation Distributions}
\subsubsection{Family of Perturbation Distributions with Known Upper $\&$ Lower Mean Bounds}
Consider the family $\mathcal{P}$ of perturbation distributions, whose upper $\&$ lower mean bounds are respectively known as,
\begin{align}
\begin{aligned} \label{eq23a} 
\bm{{m}}^U  = \begin{bmatrix} 
m_1^{U} \\
m_2^{U}\\
m_3^{U}\\
\end{bmatrix}
= \begin{bmatrix} 
0.3 \\
0.2 \\
0.1\\
\end{bmatrix}, \quad 
\bm{{m}}^L = \begin{bmatrix} 
m_1^{L} \\
m_2^{L}\\
m_3^{L} \\
\end{bmatrix}
= \begin{bmatrix} 
-0.3 \\
-0.2 \\
-0.1 \\
\end{bmatrix}
\end{aligned}
\end{align}
\textbf{a. Piecewise Linear Generating Function Based Approximation}\\
Using Theorem \ref{theorem1} we get the safe convex approximation with the piecewise linear generating function as,
\begin{align} 
\begin{aligned}	\label{eq24}
&\min_{x_i} && \frac{1}{2} \left[24.126x_1^2+8.237x_2^2+18.034x_3^2+ 2 \cdot (-1.460) x_1x_2+2 \cdot 11.032 x_1x_3+ 2 \cdot 0.461 x_2x_3 \right]\\
& s.t.: && 1+\tau- (2.609x_1-1.430x_2+6.329x_3)-(0.2x_1m_1^L+0.1x_2m_2^L+0.3x_3m_3^L) \leq 0.05,\\
& && x_1+x_2+x_3=1, \quad x_1, x_2, x_3 \geq 0,
\end{aligned}
\end{align}
where the values of $m_1^L$, $m_2^L$, $m_3^L$ are given in eq. \eqref{eq23a}.\\
 We solve this problem for different values of $\tau$ varying from $1.5$ to $3.5$ and calculate the optimal portfolio risks. Such choice of $\tau$ is because of the fact that the portfolio is a combination of all the individual assets and thus there is a high possibility of the portfolio return to lie somewhere near the average of the asset returns. The obtained optimal results are provided in Table \ref{table2}.\\
\begin{table}[htb]
	\centering
	\caption{Optimal Results for Piecewise Linear Generating Function Based Approximation}
	\begin{tabular}{| c | c  c  c | c |}
		\hline
		\cline{1-5}
		\textbf{Target Return($\tau$)} &  \multicolumn{3}{c |} {\textbf{Optimal Allocation}} &  {\textbf{Optimal Portfolio Risk}} \\
		\cline{2-4}
		{}  & Nifty Bank & Nifty Infra & Nifty IT & {} \\
		\hline
		$1.5$ & $0.0979$ & $0.4493$ & $0.4528 $ & $3.3142$  \\
		
		$1.7$ & $0.0891$ & $0.4278$ & $0.4831$ & $3.4685$   \\
		
		$1.9$ & $0.0803$ & $0.4062$ & $0.5134$ & $3.6382$   \\
		
		$2.1$ & $0.0716$ & $0.3847$ & $0.5438 $ & $3.8231$  \\
		
		$2.3$ & $0.0628$ & $0.3631$ & $0.5741 $ & $4.0232$   \\
		
		$2.5$ & $0.0540$ & $0.3415$ & $0.6045$ & $4.2386$   \\
		
		$2.7$ & $0.0452$ & $0.3200$ & $0.6348$ & $4.4693$  \\
		
		$2.9$ & $0.0364$ & $0.2984$ & $0.6652 $ & $4.7152$  \\
		
		$3.1$ & $0.0276$ & $0.2769$ & $0.6955$ & $4.9763$   \\
		
		$3.3$ & $0.0189$ & $0.2553$ & $0.7259$ & $5.2528$  \\
		
		$3.5$ & $0.0101$ & $0.2337$ & $0.7562$ & $5.5444$   \\
		
		\hline
	\end{tabular}
	\label{table2}
\end{table}
Using these results, we plot the optimal portfolio allocation graph in Fig. \ref{fig4}, where the black, red, and blue respectively indicate the weights of Nifty Bank, Nifty Infra, and Nifty IT in the optimal portfolio, when the target return varies from 1.5 to 3.5. Moreover, the tradeoff between target returns and their corresponding optimal risks is illustrated by the efficient frontier, which is given in Fig. \ref{fig5}. 
\begin{figure}[htb] 
\centering 
	\includegraphics[height=8 cm]{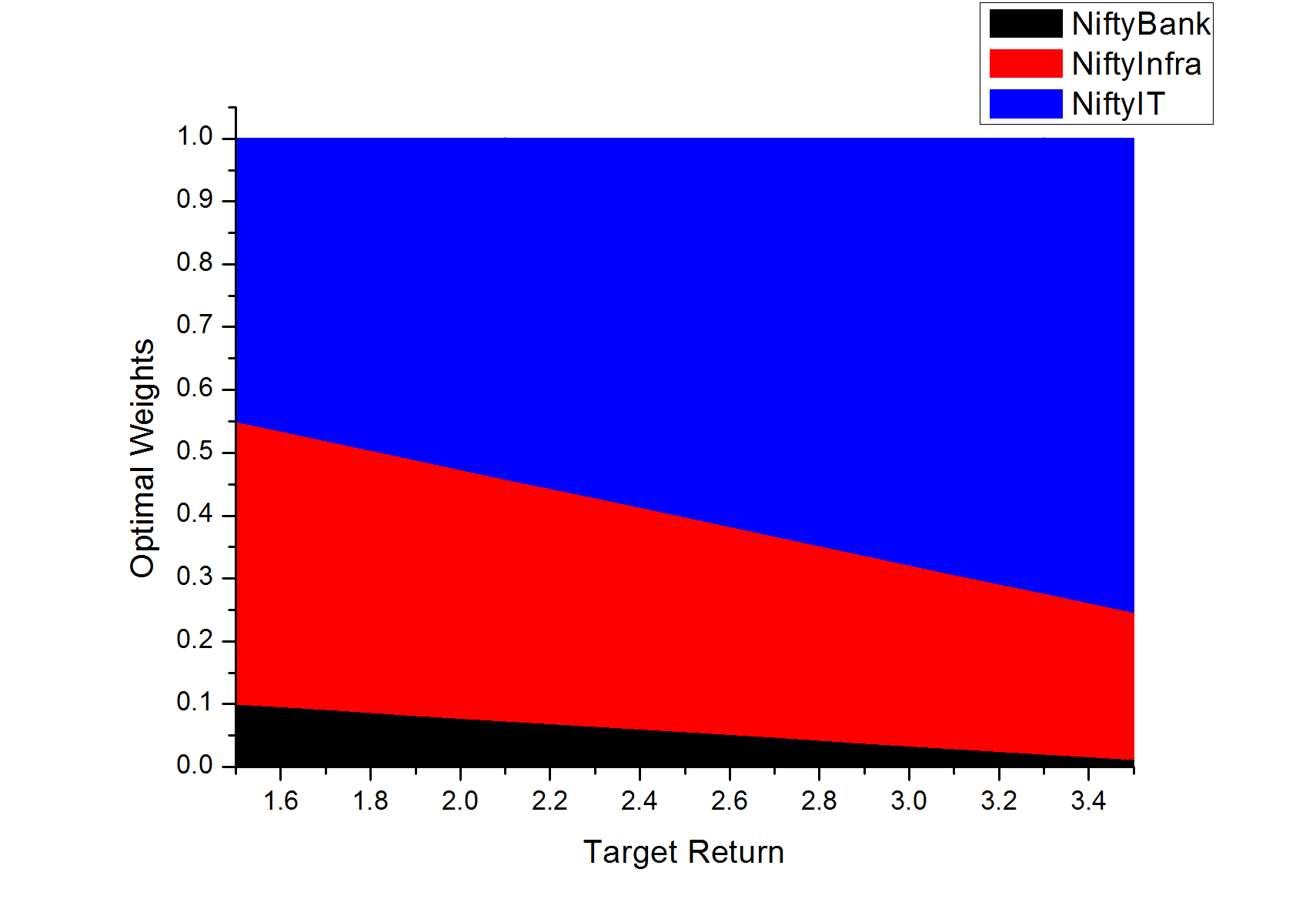}
	\caption{Portfolio Allocation for the Piecewise Linear Generating Function Based Approximation}
\label{fig4}
\end{figure}
\begin{figure}[htb]
\centering 
	\includegraphics[height=8 cm]{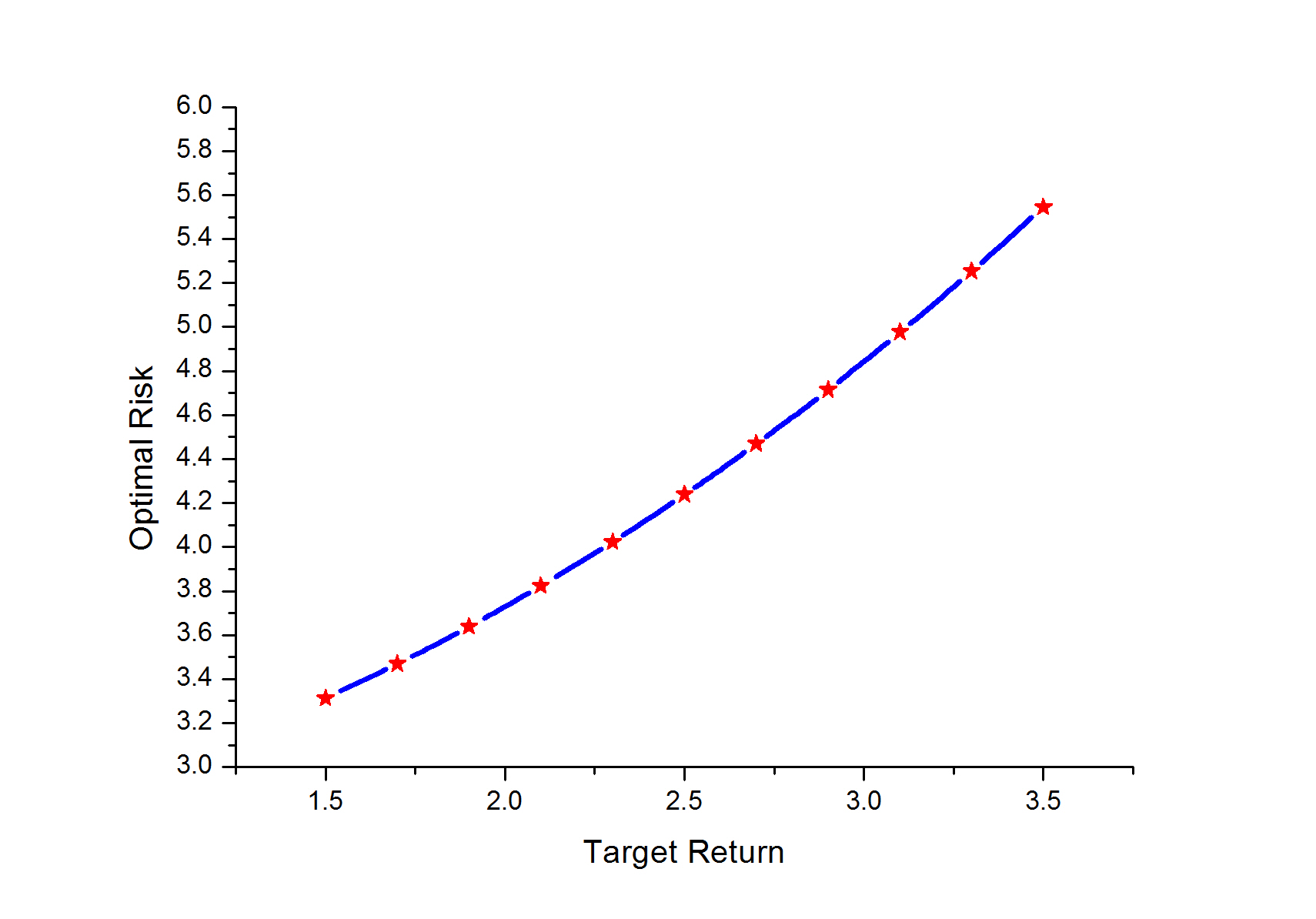}
	\caption{Efficient Frontier of the Portfolio for the Piecewise Linear Generating Function Based Approximation}
\label{fig5}
\end{figure}\\
\textbf{b. Bernstein Approximation}\\
Using Theorem \ref{theorem2} the safe convex approximation for Bernstein approximation is given by,
\begin{align} 
\begin{aligned}	\label{eq25}
	&\min_{x_i} && \frac{1}{2} \left[24.126x_1^2+8.237x_2^2+18.034x_3^2+ 2 \cdot (-1.460) x_1x_2+2 \cdot 11.032 x_1x_3+ 2 \cdot 0.461 x_2x_3 \right]\\
	& s.t.: && \tau- (2.609x_1-1.430x_2+6.329x_3)-(0.2x_1m_1^L+0.1x_2m_2^L+0.3x_3m_3^L) \leq \log(0.05),\\
	& && x_1+x_2+x_3=1, \quad x_1, x_2, x_3 \geq 0,
\end{aligned}
\end{align}
where the values of $m_1^L$, $m_2^L$, $m_3^L$ are given in eq. \eqref{eq23a}.\\
 We solve this problem for the values of $\tau$ varying from $1.5$ to $3.5$ and calculate the optimal portfolio risks. The obtained optimal results are provided in Table \ref{table3}.\\
\begin{table}[h!]
	\centering
	\caption{Optimal Results for the Bernstein Approximation}
	\begin{tabular}{| c | c  c  c | c |}
		\hline
		\cline{1-5}
		\textbf{Target Return($\tau$)} &  \multicolumn{3}{c |} {\textbf{Optimal Allocation}} &  {\textbf{Optimal Portfolio Risk}} \\
		\cline{2-4}
		{}  & Nifty Bank & Nifty Infra & Nifty IT & {} \\
		\hline
		$1.5$ & $0.0081$ & $0.2288$ & $0.7631 $ & $5.6133$  \\
		
		$1.7$ & $0.0000$ & $0.2069$ & $0.7931$ & $5.9237$   \\
		
		$1.9$ & $0.0000$ & $0.1811$ & $0.8189$ & $6.2503$   \\
		
		$2.1$ & $0.0000$ & $0.1553$ & $0.8447 $ & $6.5939$  \\
		
		$2.3$ & $0.0000$ & $0.1295$ & $0.8705 $ & $6.9543$   \\
		
		$2.5$ & $0.0000$ & $0.1037$ & $0.8963$ & $7.3316$   \\
		
		$2.7$ & $0.0000$ & $0.0779$ & $0.9221$ & $7.7257$  \\
		
		$2.9$ & $0.0000$ & $0.0520$ & $0.9480 $ & $8.1368$  \\
		
		$3.1$ & $0.0000$ & $0.0262$ & $0.9738$ & $8.5648$   \\
		
		$3.3$ & $0.0000$ & $0.0004$ & $0.9996$ & $9.0096$  \\
		
		$3.5$ & $0.0000$ & $0.0000$ & $1.0000$ & $9.5839$   \\
		
		\hline
	\end{tabular}
	\label{table3}
\end{table}
Using these results, we plot the optimal allocation graph and the efficient frontier, which are illustrated in Fig. \ref{fig6} and Fig. \ref{fig7} respectively.
\begin{figure}[htb]
\centering 
	\includegraphics[height=8 cm]{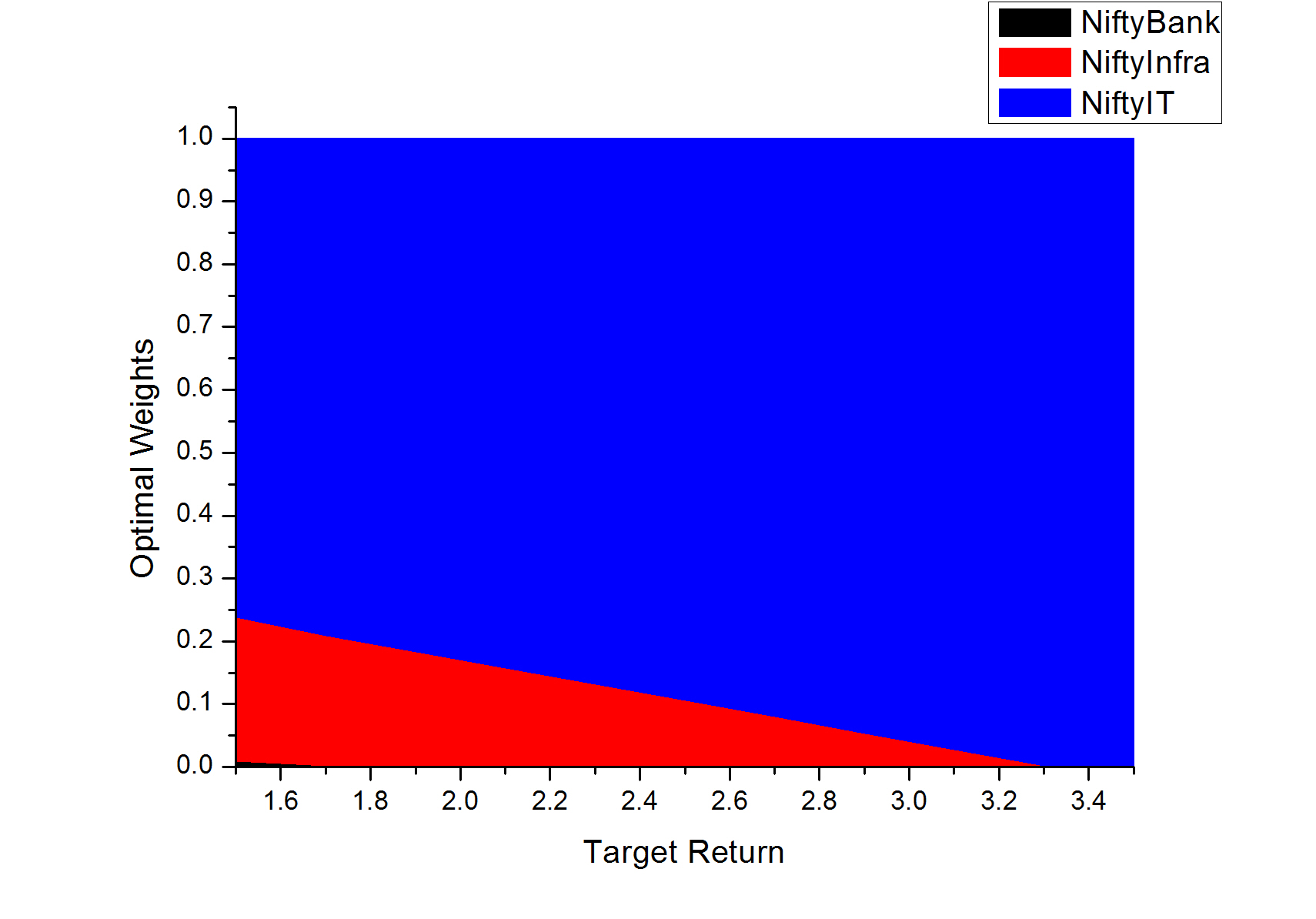}
	\caption{Portfolio Allocation for the Bernstein Approximation}
\label{fig6}
\end{figure}
\begin{figure}[htb]
\centering 
	\includegraphics[height=8 cm]{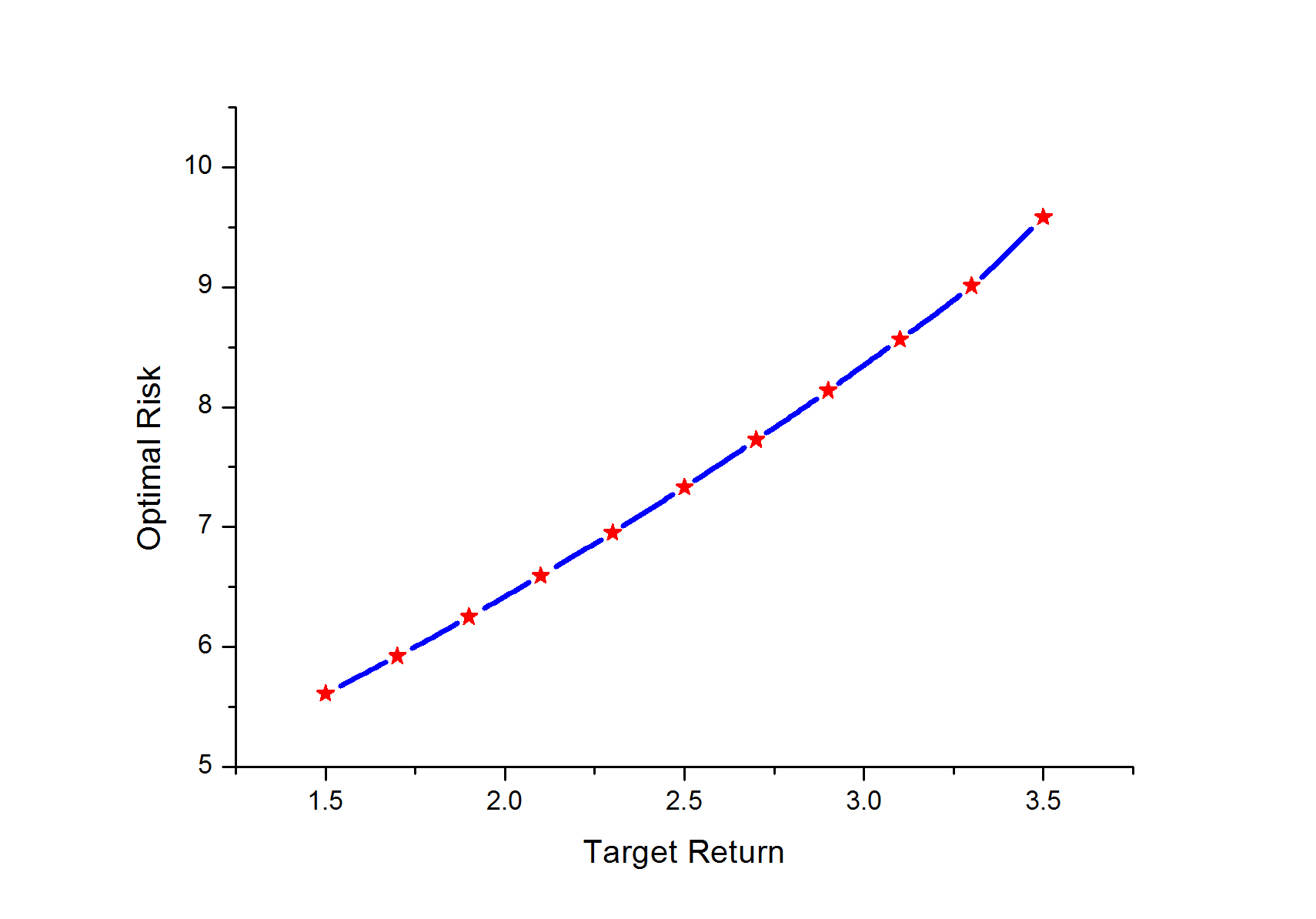}
	\caption{Efficient Frontier of the Portfolio for the Bernstein Approximation}
\label{fig7}
\end{figure}
\subsubsection{Family of Perturbation Distributions with Known Upper $\&$ Lower Mean Bounds and Standard Deviations}
In this case, we consider the family $\mathcal{P}$ of perturbation distributions, whose upper $\&$ lower mean bounds are given by eq. \eqref{eq23a} and the standard deviations are given by,
\begin{align}
\begin{aligned} \label{eq25a}
\bm{s} = \begin{bmatrix} 
s_1 \\
s_2\\
s_3 \\
\end{bmatrix}
= \begin{bmatrix} 
0.1 \\
0.1 \\
0.1 \\
\end{bmatrix}
\end{aligned}
\end{align}
\textbf{a. Piecewise Quadratic Approximation}\\
Using Theorem \ref{theorem3} we get the safe convex approximation with the piecewise quadratic generating function as,
\begin{align} 
\begin{aligned}	\label{eq26}
&\min_{x_i} && \frac{1}{2} \left[24.126x_1^2+8.237x_2^2+18.034x_3^2+ 2 \cdot (-1.460) x_1x_2+2 \cdot 11.032 x_1x_3+ 2 \cdot 0.461 x_2x_3 \right]\\
& s.t.: && [(1+\tau-(2.609x_1-1.430x_2+6.329x_3)]^2+[(0.2x_1)^2s_1^2+(0.1x_2)^2s_2^2+(0.3x_3)^2s_3^2]\\
& &&+[(0.2x_1)m_1^U+(0.1x_2)m_2^U+(0.3x_3)m_3^U]^2\\
& && -2\left [1+\tau- (2.609x_1-1.430x_2+6.329x_3)\right ]\cdot \left [0.2x_1(m_1^L)+0.1x_2(m_2^L)+0.3x_3(m_3^L)\right ] \leq 0.05,\\
& && x_1+x_2+x_3=1, \quad x_1, x_2, x_3 \geq 0,
\end{aligned}
\end{align}
where the values of $m_j^L$ $\&$ $m_j^U$, ($j=1,2,3$) are given in eq. \eqref{eq23a}, and the values of $s_j$, ($j=1,2,3$) are given in eq. \eqref{eq25a}.\\
We solve this problem for the values of $\tau$ varying from $1.5$ to $3.5$ and calculate the optimal portfolio risks. The obtained optimal results are provided in Table \ref{table4}.\\
\begin{table}[htb]
	\centering
	\caption{Optimal Results for Piecewise Quadratic Function Based Approximation}
	\begin{tabular}{| c | c  c  c | c |}
		\hline
		\cline{1-5}
		\textbf{Target Return($\tau$)} &  \multicolumn{3}{c |} {\textbf{Optimal Allocation}} &  {\textbf{Optimal Portfolio Risk}} \\
		\cline{2-4}
		{}  & Nifty Bank & Nifty Infra & Nifty IT & {} \\
		\hline
		$1.5$ & $0.1060$ & $0.4584$ & $0.4356 $ & $3.2423$  \\
		
		$1.7$ & $0.0974$ & $0.4362$ & $0.4664$ & $3.3924$   \\
		
		$1.9$ & $0.0888$ & $0.4140$ & $0.4972$ & $3.5585$   \\
		
		$2.1$ & $0.0803$ & $0.3917$ & $0.5280 $ & $3.7405$  \\
		
		$2.3$ & $0.0717$ & $0.3695$ & $0.5588 $ & $3.9384$   \\
		
		$2.5$ & $0.0631$ & $0.3473$ & $0.5896$ & $4.1523$   \\
		
		$2.7$ & $0.0546$ & $0.3250$ & $0.6204$ & $4.3821$  \\
		
		$2.9$ & $0.0460$ & $0.3028$ & $0.6512 $ & $4.6279$  \\
		
		$3.1$ & $0.0374$ & $0.2806$ & $0.6820$ & $4.8896$   \\
		
		$3.3$ & $0.0289$ & $0.2583$ & $0.7128$ & $5.1673$  \\
		
		$3.5$ & $0.0203$ & $0.2361$ & $0.7436$ & $5.4609$   \\
		
		\hline
	\end{tabular}
	\label{table4}
\end{table}
Using these results, we plot the optimal portfolio allocation graph and efficient frontier, which are illustrated in Fig. \ref{fig8} and Fig. \ref{fig9} respectively.
\begin{figure}[htb]
\centering
\includegraphics[height=8 cm]{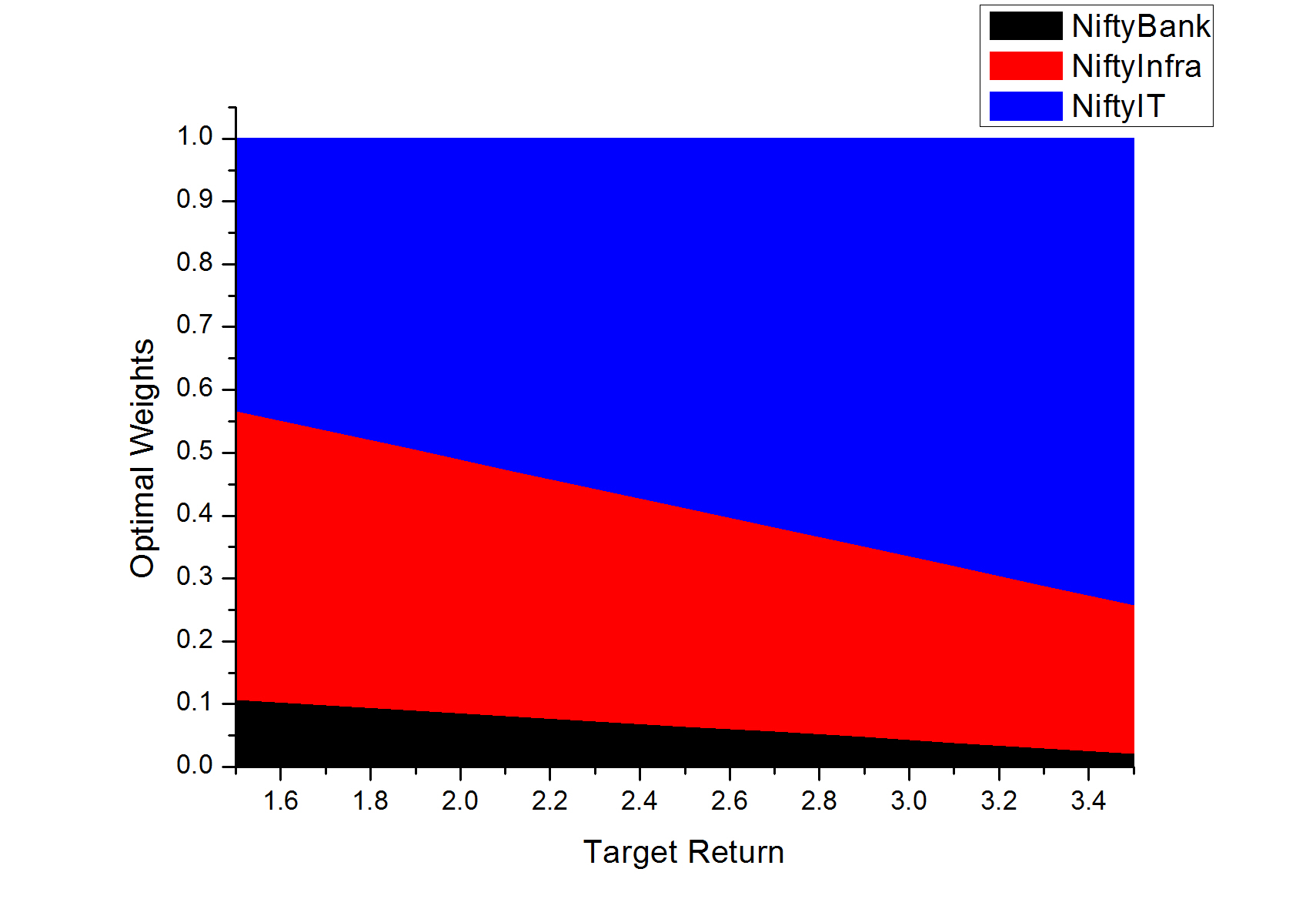}
\caption{Portfolio Allocation for the Piecewise Quadratic Function Based Approximation}
\label{fig8}
\end{figure}
\begin{figure}[htb]
\centering 
\includegraphics[height=8 cm]{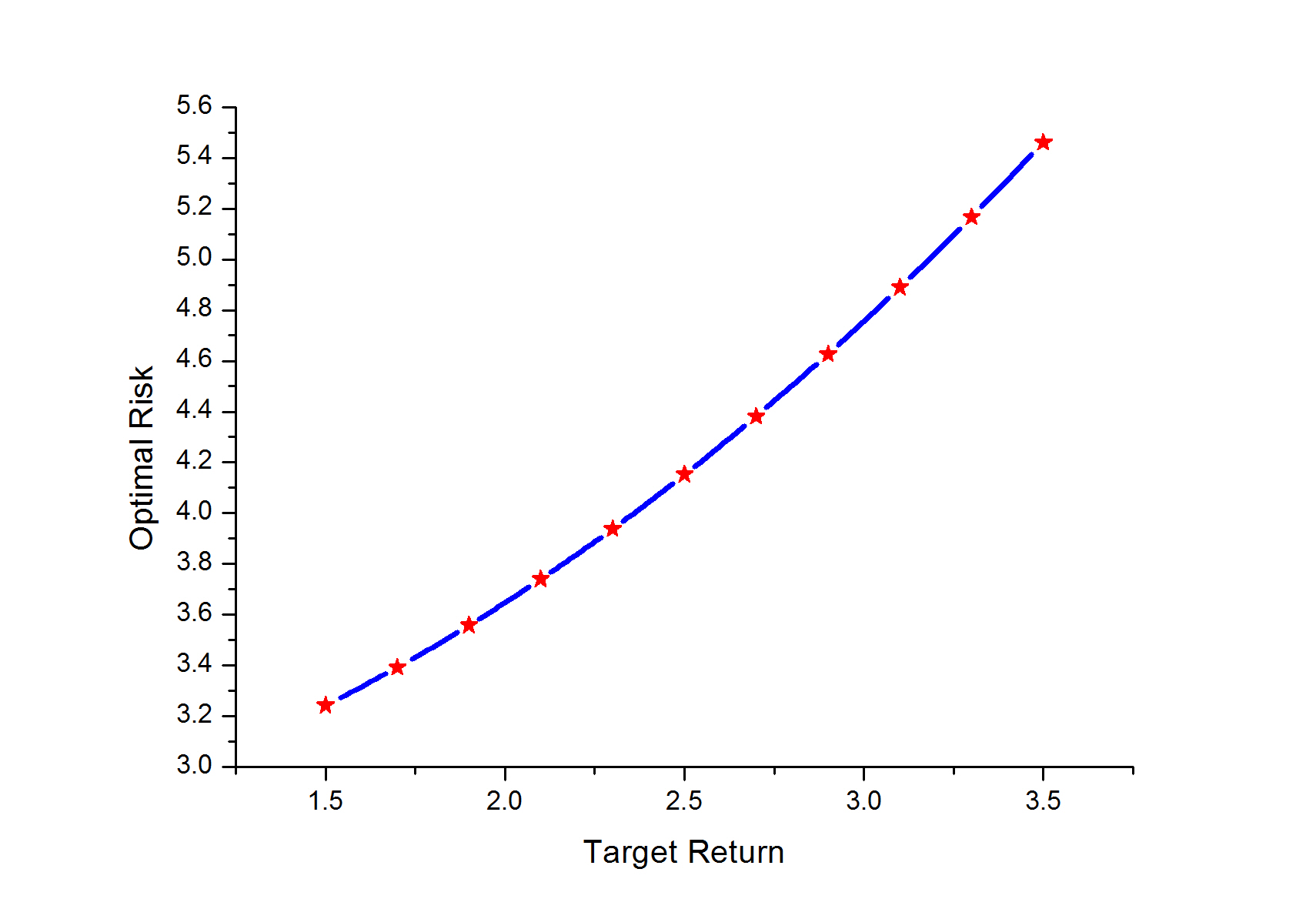}
\caption{Efficient Frontier of the Portfolio for the Piecewise Quadratic Generating Function Based Approximation}
\label{fig9}
\end{figure}
\section{Conclusion} \label{section6}
In this paper, we derive safe convex approximations for the ambiguous chance constraint based uncertain portfolio optimization problems.  Our study comprises two families of perturbation distributions-- (i) family with known upper $\&$ lower mean bounds, and (ii) family with known upper $\&$ lower mean bounds and standard deviations. We use a piecewise linear function and an exponential function as the generator to derive the safe approximation for the former case, whereas a piecewise quadratic function is used for the latter case. The approximations which we get from family-i are quadratic programming problems and the approximation for family-ii is a quadratically constrained quadratic programming problem. Thus these approximations are computationally tractable and by solving them we get the robust solutions to our original ambiguous chance constrained problems. Moreover, we solve a numerical problem by the use of these approximations and we plot the optimal allocations and efficient frontiers for each case. From the two approximations of family-ii, it is observed that the optimal portfolio risks for the Bernstein approximation are higher than that of the piecewise linear generator based approximations.\\
Furthermore, our study can be used for any optimization problem with the linear chance constraint(s). In this study, we use piecewise linear and quadratic functions as the generators for deriving the safe approximations. For the derivation of piecewise linear generator based approximations, we require the information regarding the mean bounds of the perturbations, whereas for piecewise quadratic generator based approximations we need the standard deviation (or variance) information of the perturbations along with their mean bounds. Similarly, if we have the information regarding the first $n$ moment statistics of the perturbations, then we can extend our study to use piecewise generating functions of any degree $n$, where $n$ is a positive integer.\\ 
\\
\textbf{Data Availability:} The Nifty price data of the three sectors-- Nifty Bank, Nifty Infra and Nifty IT used in the study are collected for the time period June 2017 to May 2022 and the data is available in the website \href{https://finance.yahoo.com}{https://finance.yahoo.com}. We also attach the data in the file named Nifty price data\\
\textbf{Conflict of Interest Statement:}
The authors declare that they have no conflicts of interest.

%
%


\newpage

\end{document}